\newtheorem{theorem}{Theorem}[section]
\newtheorem{lemma}[theorem]{Lemma}
\newtheorem{corollary}[theorem]{Corollary}
\newtheorem{proposition}[theorem]{Proposition}
\theoremstyle{definition}
\newtheorem{definition}[theorem]{Definition}
\newtheorem{definition-proposition}[theorem]{Definition-Proposition}
\newtheorem{remark}[theorem]{Remark}
\newtheorem{example}[theorem]{Example}
\def\C{\mathcal{C}}
\def\D{\mathcal{D}}
\def\H{\mathcal {H}}
\def\X{\mathscr{X}}
\def\Y{\mathscr{Y}}
\def \text{\mbox}
\providecommand{\add}{\mathop{\rm add}\nolimits}%
\providecommand{\Ext}{\mathop{\rm Ext}\nolimits}%
\providecommand{\Hom}{\mathop{\rm Hom}\nolimits}%
\def\XX{\widetilde{\X}}
\begin{document}

\title{Ptolemy diagrams and torsion pairs in m-cluster categories of type $D$}

\author[Chang]{Huimin Chang}
\address{
Department of Applied Mathematics
School of Education
The Open University of China
100039 Beijing
China
}
\email{changhm@ouchn.edu.cn}

\begin{abstract}

In this paper, we give a complete classification of torsion pairs in $m$-cluster categories of type $D$ when $m$ is odd, denoted by $\C_{D_{n}}^{m}$, via a bijection to combinatorial objects called Ptolemy diagrams of type D. As applications, we classify $m$-rigid subcategories of $\C_{D_{n}}^{m}$, which gives Jacquet-Malo's \cite{J} classification of $m$-cluster tilting subcategories of $\C_{D_{n}}^{m}$. When $m=1$, we generalizes the work of Holm, J{\o}rgensen and Rubey \cite{HJR3} for the classification of torsion pairs in cluster categories of type $D_{n}$.

\end{abstract}

\subjclass[2020]{18E99; 18D99; 18E30}

\keywords{$m$-cluster category of type $D$; torsion pair; Ptolemy diagram}

\thanks{This work was supported by the NSF of China (Grant No.\;12031007). }

\maketitle

\section{Introduction}

Torsion theory is a classical topic in the representation theory of algebras. The notion of torsion pairs in triangulated categories was first introduced by Iyama and Yoshino \cite{IY} to study cluster tilting subcategories in a triangulated category. Cotorsion pairs in a triangulated category were used by Nakaoka \cite{N} to unify the abelian structures arising from $t$-structures \cite{BBD} and from cluster tilting subcategories \cite{KR,KZ,IY}. Torsion pairs and cotorsion pairs in a triangulated category can be transformed into each other by shifting the torsion-free parts (see Definition \ref{dftor}). Hence, classifications of torsion pairs and cotorsion pairs are equivalent in a triangulated category. In fact, classification of (co)torsion pairs in triangulated categories has been studied by many people \cite{CZZ1,CZ,CZ1,GHJ,HJR1,HJR2,HJR3,Ng,ZZZ}.

Cluster categories were introduced with the aim to categorify Fomin and Zelevinsky's cluster algebras, by Buan, Marsh, Reineke, Reiten and Todorov \cite{BMRRT} and also by Caldero, Chapoton and Schiffler who constructed a geometric model for the cluster categories of type $A_n$ \cite{CCS}. As a generalization of cluster categories, Keller \cite{K} introduced $m$-cluster categories with Calabi-Yau dimension $m+1$. Baur and Marsh gave the geometric models of the $m$-cluster categories of type $A_n$ \cite{BM}, and the $m$-cluster categories of type $D_n$ \cite{BM1}, respectively,  which generalise the results of Caldero-Chapoton-Schiffler for $m=1$ of type $A_n$ \cite{CCS}, and Schiffler for $m=1$ of type $D_n$ \cite{S}, respectively.

(Co)Torsion pairs have been classified for many special triangulated categories which have combinatorial models. For example, Ng \cite{Ng} classified torsion pairs in the cluster category of type $A_\infty$ \cite{HJ2} via certain configurations of arcs of the infinity-gon. Holm, J{\o}rgensen and Rubey \cite{HJR1,HJR2,HJR3} classified torsion pairs in the cluster category of type $A_{n}$, in the cluster tube and in the cluster category of type $D_n$ via Ptolemy diagrams of a regular $(n+3)$-gon, periodic Ptolemy diagrams of the infinity-gon and Ptolemy diagrams of a regular $2n$-gon, respectively. Zhang, Zhou and Zhu \cite{ZZZ} classified cotorsion pairs in the cluster category of a marked surface via paintings of the surface. Chang and Zhu \cite{CZ} classified torsion pairs in certain finite $2$-Calabi-Yau triangulated categories with maximal rigid objects, via periodic Ptolemy diagrams of a regular polygon. Chang, Zhou and Zhu \cite{CZZ1} classified cotorsion pairs in the cluster categories of type $A^{\infty}_{\infty}$ via certain configurations of arcs of an infinite strip with marked points  in the plane, called $\tau$-compact Ptolemy diagrams. Gratz, Holm and J{\o}rgensen \cite{GHJ} gave a classification of torsion pairs in Igusa-Todorov's cluster categories of type $A_{\infty}$ via their combinatorial models. Recently, Chang and Zhu \cite{CZ1} classified cotorsion pairs in higher cluster categories of type $A_n$ via Ptolemy diagrams of a regular polygon.

In this paper, we give a classification of torsion pairs in $m$-cluster categories of type $D_{n}$ when $m$ is odd, by using their combinatorial models, but not the one introduced by Baur and Marsh \cite{BM1}, which corresponds to collection of arcs in a punctured regular $(mn-m+1)$-gon. We introduce a new combinatorial model in an unpunctured regular $2(mn-m+1)$-gon. In this model, the indecomposable objects of $m$-cluster categories of type $D_{n}$ are parametrized by pairs of rotationally symmetric arcs and by diameters in two colors, which generalizes the geometric model of the cluster category of type $D_{n}$ given by Fomin and Zelevinsky \cite{FZ}. For a precise description of this model we refer to Section 2 below.

We show in this paper that there is a bijection between the torsion pairs in $m$-cluster categories of type $D_{n}$ and the Ptolemy diagrams of type $D$ in a $2(mn-m+1)$-gon when $m$ is odd. As applications, we classify $m$-rigid subcategories of $m$-cluster categories of type $D_{n}$, denoted by $\C_{D_{n}}^{m}$, which deduces the classification of $m$-cluster tilting subcategories of $\C_{D_{n}}^{m}$ in \cite{J}. When $m=1$, we generalize the work of Holm, J{\o}rgensen and Rubey \cite{HJR3} for the classification of torsion pairs in cluster categories of type $D_{n}$.

The paper is organized as follows. In Section 2, we review the notions of $m$-cluster category of type $D_n$ and its combinatorial model and introduce a new combinatorial model in a regular $2(mn-m+1)$-gon, and review torsion pairs in a triangulated category at last. In Section 3, we define Ptolemy diagrams of type $D_n$ in a $2(mn-m+1)$-gon and give a classification of torsion pairs in $\C_{D_{n}}^{m}$. Some applications are given in the last section.

\subsection*{Conventions} In this paper, $K$ stands for an algebraically closed field. Every subcategory of a category is assumed to be full and closed under taking isomorphisms, finite direct sums and direct summands. Thus, any subcategory is determined by the indecomposable objects in this subcategory. For two subcategories $\X,\Y$ of a triangulated category $\D$, $\Hom_{\D}(\X,\Y)=0$ means $\Hom_{\D}(X,Y)=0$ for any $X\in \X$ and any $Y\in \Y$, and denote by $\X\ast\Y$  the  subcategory of $\D$ whose objects are $M$ which fits into a triangle $X\to M\to Y\to X[1]$ with $X\in\X$ and $Y\in\Y$. A subcategory $\X$ is called  extension closed  provided that $\X\ast\X \subseteq\X.$  We denote by $\X^\perp$ (resp. $^\perp\X$) the subcategory whose objects are $M\in\D$ satisfying $\Hom_{\D}(\X,M)=0$ (resp. $\Hom_{\D}(M,\X)=0$). We use $\Ext_{\D}^i(X,Y)$ to denote $\Hom_{\D}(X,Y[i])$, $i\in\mathbb{Z}$, where $[1]$ is the shift functor of $\D$.

\section{Preliminaries}

\subsection{m-cluster categories}

Let $H$ be a finite dimensional hereditary algebra over a field $K$, and $\H=\D^{b}(H)$ be the bounded derived category of $H$. The Auslander-Reiten translate of $\H$ is denoted by $\tau$ and the shift functor of $\H$ is denoted by $[1]$. We review the notion of $m$-cluster categories  based on \cite{K}, and summarize some known facts about $m$-cluster categories based on \cite{BMRRT,K,Z}.

\begin{definition}\label{def:m-cluster category}
The orbit category $\D^{b}(H)/\tau^{-1}[m]$ is called the $m$-cluster category of $\H$, and is denoted by $\C^{m}(\H)$.
\end{definition}

\begin{definition}
A triangulated category $\D$ is called $d$-Calabi-Yau (shortly $d$-CY) for $d\in\mathbb{N}$ provided there is a functorial isomorphism
\[\Hom_{\D}(X,Y)\simeq D\Hom_{\D}(Y, X[d]),\]
for any $X, Y\in \D$, where $D=\Hom_{K}(-,K)$.
\end{definition}

\begin{proposition}\label{facts:m-cluster category}
\begin{enumerate}
  \item The category $\C^{m}(\H)$ has Auslander-Reiten triangles and Serre functor $S=\tau[1]$, where $\tau$ is the AR-translate and $[1]$ is the shift functor in $\C^{m}(\H)$ induced by the one in $\H$.
  \item The category  $\C^{m}(\H)$ is a Krull-Schmidt $(m+1)$-CY category.
\end{enumerate}
\end{proposition}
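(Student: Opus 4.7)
The plan is to deduce both statements by transferring structure from the bounded derived category $\H = \D^b(H)$ to the orbit category $\C^m(\H) = \H/\tau^{-1}[m]$ via the canonical projection functor $\pi\colon \H \to \C^m(\H)$. The first key input is Keller's theorem asserting that, for the autoequivalence $F = \tau^{-1}[m]$ acting on the hereditary derived category $\H$, the orbit category $\H/F$ inherits a natural triangulated structure for which $\pi$ is a triangle functor and $F$ becomes isomorphic to the identity; I would quote this result from \cite{K} as a black box. The second input is that $\H$ itself has Auslander--Reiten triangles and Serre functor $\tau[1]$, which is standard for $\D^b(H)$ when $H$ is finite-dimensional hereditary.

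For part (1), I would argue that AR-triangles descend along $\pi$. Since $\pi$ is a triangle functor and the $F$-orbits of AR-triangles in $\H$ remain AR-triangles after projection (because the endomorphism ring of an indecomposable $\pi(X)$ can be computed as $\bigoplus_{i\in\mathbb{Z}}\Hom_{\H}(X,F^i X)$, which is still local provided all but finitely many summands vanish; this finiteness is where hereditariness of $H$ enters), one obtains AR-triangles in $\C^m(\H)$ from those of $\H$. For the Serre functor, the isomorphism
\[
\Hom_{\C^m(\H)}(\pi X,\pi Y) \;\cong\; \bigoplus_{i\in\mathbb{Z}}\Hom_{\H}(X,F^i Y)
\]
combined with Serre duality in $\H$ and the fact that $\tau[1]$ commutes with $F$ up to natural isomorphism yields a functorial duality on $\C^m(\H)$ witnessed by $S = \tau[1]$.

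For part (2), the Krull--Schmidt property follows once one knows that endomorphism rings of indecomposables in $\C^m(\H)$ are local; this reduces to the observation above that, under the hereditary hypothesis, only finitely many summands $\Hom_{\H}(X,F^i X)$ are nonzero, so local endomorphism rings in $\H$ induce local endomorphism rings in $\C^m(\H)$. Finally, for the $(m+1)$-Calabi--Yau property, in $\C^m(\H)$ we have $F = \tau^{-1}[m] \cong \mathrm{id}$, hence $\tau \cong [m]$, and consequently
\[
S \;=\; \tau[1] \;\cong\; [m][1] \;=\; [m+1],
\]
so Serre duality produces the required functorial isomorphism $\Hom(X,Y) \cong D\Hom(Y,X[m+1])$.

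The main obstacle I anticipate is the rigorous justification of the triangulated structure on $\C^m(\H)$ and the verification that $\pi$ is a triangle functor compatible with $\tau$; I would not reprove Keller's construction but simply invoke it. A subsidiary technical point is controlling the sum $\bigoplus_i \Hom_{\H}(X, F^i Y)$, which requires the hereditary assumption on $H$ to ensure finiteness and hence both the Krull--Schmidt conclusion and well-definedness of the Hom spaces in the orbit category.
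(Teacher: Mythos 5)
The paper does not give a proof of this proposition; it is stated as a summary of known facts with citations to \cite{BMRRT}, \cite{K}, and \cite{Z}, so there is no internal argument to compare against. Your sketch reconstructs the standard reasoning from those sources correctly: Keller's theorem on orbit categories supplies the triangulated structure and a triangle functor $\pi\colon\H\to\C^m(\H)$; finiteness of $\bigoplus_i\Hom_\H(X,F^iY)$ (which does rest on $H$ being hereditary, and in fact of finite representation type here) gives finite-dimensional, hence local, endomorphism rings, yielding Krull--Schmidt and the descent of AR-triangles and the Serre functor; and the identification $F=\tau^{-1}[m]\cong\mathrm{id}$ in the quotient gives $\tau\cong[m]$, so $S=\tau[1]\cong[m+1]$ and Serre duality becomes the $(m+1)$-Calabi--Yau isomorphism. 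The one place you should be slightly more careful if you wanted a complete proof is the claim that the orbit endomorphism ring is ``still local''; the cleanest route is to observe that it is finite-dimensional over the algebraically closed field $K$ with $\End_\H(X)$ as its degree-zero piece and the remaining pieces sitting in the radical, rather than asserting locality from finiteness of the sum alone. This is exactly the content the cited references make precise, so your invocation of them as black boxes is appropriate.
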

When $H=KQ$ is the path algebra over a Dynkin quiver of type $D_{n}$ (see Fig. \ref{D}), the $m$-cluster category $\C^{m}(\H)$, denoted by $\C_{D_{n}}^{m}$, is called the $m$-cluster category of type $D_{n}$, which we consider in this paper. Let $\mathrm{ind}\;\C_{D_{n}}^{m}$ denote the category of isoclasses of indecomposable objects in $\C_{D_{n}}^{m}$.
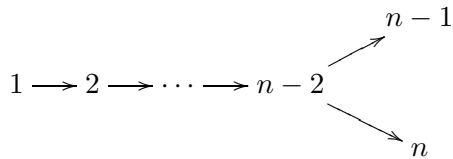
\begin{figure}
\centering

$$
\xymatrix@C=1.5em@R=1em{ &&&&n-1\\
1 \ar@{->}[r] & 2 \ar@{->}[r] & \cdots \ar@{->}[r]& n-2 \ar@{->}[ur] \ar@{->}[dr]\\
&&&& n\\
}
$$
\caption{The Dynkin quiver of type $D_n$}
\label{D}
\end{figure}

\subsection{A geometric model for the m-cluster categories of type $D_{n}$ }
In this subsection, we firstly recall from \cite{BM1} a geometric description of $\C_{D_{n}}^{m}$ in a punctured regular $(mn-m+1)$-gon (let $N=(mn-m+1)$ throughout this paper), then we give a new combinatorial model in a regular $2N$-gon, as Fomin and Zelevinsky's work \cite{FZ} when $m=1$, and show that they are equivalent.

Let $\Pi$ be a punctured regular $N$-gon, $m,n\in\mathbb{N}$, with vertices numbered clockwise from 1 to $N$. We regard all operations on vertices of $\Pi$ modulo $N$ in this subsection. For $i,j\in\{1,2,\cdots,N\}$,
\begin{enumerate}
  \item [(1)] if $i\neq j$, and $j\neq i+1$, i.e. $j$ is not the clockwise neighbor of $i$, an arc $D_{ij}$ is a line from $i$ to $j$ in the clockwise direction.
  \item [(2)] if $i=j$, the two tagged arcs are denoted by $D_{ii}^+$ and $D_{ii}^-$, as in Schiffler's work \cite{S}.
\end{enumerate}
We write $D_{ij}^{\pm}$ (where $j\neq i+1$) to denote an arbitrary arc without confusion, and call it a tagged arc. In that case, if $i\neq j$, then $D_{ij}^{\pm}$ will only stand for the arc $D_{ij}$. As an example, the tagged arcs $D_{14}$, $D_{11}^+$ and $D_{11}^-$ in a punctured regular $9$-gon are shown in Fig. \ref{DD}. 
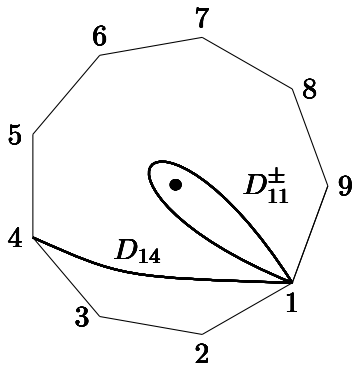
\begin{figure}
\begin{center}
\begin{tikzpicture}[scale=2]
\begin{scope}
    \foreach \x in {-40,0,40,80,120,160,200,240,280,320}{
        \draw (\x:1 cm) -- (\x + 40: 1cm) -- cycle;
        \draw [thick](200:1cm)..controls(240:0.7cm)..(320:1cm);
        \draw [thick](320:1cm)..controls(180:0.8cm) and (100:0.8cm)..(320:1cm);
         \node at (0:0.6cm)   {$D_{11}^{\pm}$};
         \node at (240:0.5cm)   {$D_{14}$};
         \node[right] at (0:1cm) {9};
         \node[right] at (40:1cm) {8};
         \node[above] at (80:1cm) {7};
         \node[above] at (120:1cm) {6};
         \node[left] at (160:1cm) {5};
         \node[left] at (200:1cm) {4};
         \node[left] at (240:1cm) {3};
         \node[below] at (280:1cm) {2};
         \node[below] at (320:1cm) {1};
         \node at (320:0cm) {$\bullet$};

}
\end{scope}
\end{tikzpicture}
\caption{Arcs in a punctured $9$-gon}
\label{DD}
\end{center}
\end{figure}
\begin{definition} \cite[Definition 4.1]{BM1}
Let $D_{ij}$ be an arc of $\Pi$. When $i<j$, we say that $D_{ij}$ is an $m$-arc if $j-i\equiv 1$ mod $m$; When $i>j$, we say that $D_{ij}$ is an $m$-arc if $j+N-i\equiv 1$ mod $m$; When $i=j$, we say that $D_{ii}^\pm$ is an $m$-arc.
\label{Defm}
\end{definition}
\begin{remark}
The definition of an $m$-arc is compatible with Baur and Marsh's in \cite{BM1}.
\end{remark}
\begin{example}
Suppose $m=2, n=5$, and let $\Pi$ be a punctured $9$-gon. Then $D_{14}$, $D_{11}^+$ and $D_{11}^-$ are $2$-arcs, since $4-1\equiv 1$ mod 2 (see Fig. \ref{DD}).
\end{example}
Next we recall the $m$-moves \cite{BM1} generalizing the $m$-rotation of type $A_n$ \cite{BM} and the elementary moves of type $D_n$ \cite{S}.
\begin{definition} \cite[Definition 4.3]{BM1}
Let $\Pi$ be a punctured $N$-gon. An $m$-move arises when there are two arcs with a common endpoint such that the two arcs and a part of the boundary bound an $(m+2)$-gon. We say that there is an $m$-move taking the first arc to the second if the angle from the first arc to the second at the common endpoint is clockwise. More precisely, it is a move of one of the following four forms:
\begin{itemize}
  \item [(1)] $D_{ij}\rightarrow D_{ik}$, if $k-j=m$;
  \item [(2)] $D_{ij}\rightarrow D_{kj}$, if $k-i=m$;
  \item [(3)] $D_{ij}\rightarrow D_{ii}^\pm$, if $i-j=m$;
  \item [(4)] $D_{ii}^\pm\rightarrow D_{ji}$, if $j-i=m$.
\end{itemize}
\label{Defmove}
\end{definition}
The four types of $m$-moves are illustrated in Fig. \ref{moves}.

\begin{figure}
\begin{center}
\begin{minipage}{.22\textwidth}
\centering
\begin{tikzpicture}[scale=1.3]
\begin{scope}
 \foreach \x in {-40,0,40,80,120,160,200,240,280,320}{
        \draw (\x:1 cm) -- (\x + 40: 1cm) -- cycle;
        \draw [->](290:0.65cm)..controls(290:0.7cm)..(300:0.5cm);
        \draw [thick](160:1cm)--(200:1cm);
        \draw [thick](120:1cm)--(160:1cm);
        \draw [thick](200:1cm)..controls(240:0.7cm)..(320:1cm);
        \draw [thick](320:1cm)..controls(200:0.5cm)..(120:1cm);

         \node at (320:0cm) {$\bullet$};

}
\end{scope}
\end{tikzpicture}
\end{minipage}%
\begin{minipage}{.22\textwidth}
\centering
\begin{tikzpicture}[scale=1.3]
\begin{scope}
\foreach \x in {-40,0,40,80,120,160,200,240,280,320}{
        \draw (\x:1 cm) -- (\x + 40: 1cm) -- cycle;
        \draw [<-](90:0.65cm)..controls(85:0.6cm)..(80:0.45cm);
        \draw [thick](240:1cm)--(200:1cm);
        \draw [thick](160:1cm)--(200:1cm);
        \draw [thick](240:1cm)..controls(140:0.5cm)..(40:1cm);
        \draw [thick](160:1cm)..controls(100:0.7cm)..(40:1cm);
         \node at (320:0cm) {$\bullet$};
}
\end{scope}
\end{tikzpicture}
\end{minipage}
\begin{minipage}{.22\textwidth}
\centering
\begin{tikzpicture}[scale=1.3]
\begin{scope}
\foreach \x in {-40,0,40,80,120,160,200,240,280,320}{
        \draw (\x:1 cm) -- (\x + 40: 1cm) -- cycle;
        \draw [->](300:0.65cm)..controls(305:0.6cm)..(306:0.55cm);
        \draw [thick](320:1cm)--(0:1cm);
        \draw [thick](0:1cm)--(40:1cm);
        \draw [thick](40:1cm)..controls(140:0.8cm)and (220:0.9cm)..(320:1cm);
        \draw [thick](320:1cm)..controls(180:0.8cm) and (100:0.8cm)..(320:1cm);
\node at (320:0cm) {$\bullet$};

}
\end{scope}
\end{tikzpicture}
\end{minipage}
\begin{minipage}{.22\textwidth}
\centering
\begin{tikzpicture}[scale=1.3]
\begin{scope}
\foreach \x in {-40,0,40,80,120,160,200,240,280,320}{
        \draw (\x:1 cm) -- (\x + 40: 1cm) -- cycle;
        \draw [->](332:0.55cm)..controls(335:0.6cm)..(340:0.65cm);
        \draw [thick](280:1cm)--(320:1cm);
        \draw [thick](280:1cm)--(240:1cm);

        \draw [thick](240:1cm)..controls(140:0.8cm) and (60:0.8cm)..(320:1cm);
        \draw [thick](320:1cm)..controls(180:0.8cm) and (100:0.8cm)..(320:1cm);
\node at (320:0cm) {$\bullet$};

}
\end{scope}
\end{tikzpicture}
\end{minipage}
\end{center}
\caption{Four types of 2-moves inside a punctured 9-gon}
\label{moves}
\end{figure}
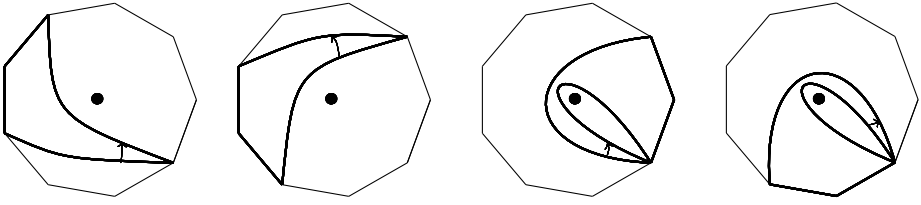

Let $\tau_{m}$ be the translation map acting on $\Pi$ defined as follows. If $i\neq j$ or $m$ is even, then $\tau_{m}$ rotates a tagged arc anti-clockwise around the center. If  $i=j$ and $m$ is odd, then $\tau_{m}$ rotates the tagged arc anti-clockwise around the center and changes its tag. That is,

\begin{equation*}
\tau_{m}(D_{ij}^{\pm})=
\left\{
\begin{aligned}
&D_{i-m,j-m}^{\pm}\quad i\neq j\; \mathrm{or}\; m \;\mathrm{is \;even}\\
&D_{i-m,i-m}^{\mp}\quad i=j \; \mathrm{and}\;  m \; \mathrm{is \;odd}\\
\end{aligned}
\right
.
\end{equation*}

Let $\Gamma(n,m)$ be the quiver whose vertices are the tagged $m$-arcs of $\Pi$, and whose arrows are given by $m$-moves. Then we have the following result by \cite{BM1}.
\begin{theorem}\cite[Theorem 4.5]{BM1}
The quiver $\Gamma(n,m)$ is a translation quiver isomorphic to the AR-quiver of $\C_{D_{n}}^{m}$.
\label{equi}
\end{theorem}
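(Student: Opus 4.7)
The plan is to match $\Gamma(n,m)$ with the AR-quiver of $\C_{D_{n}}^{m}$ as translation quivers in three steps: (i) bijection of vertices, (ii) compatibility of translation, (iii) compatibility of arrows. Throughout, I would use that by Proposition \ref{facts:m-cluster category} and the general theory of orbit categories, the AR-quiver of $\C_{D_{n}}^{m}$ is $\mathbb{Z} D_n / \langle \tau^{-1}[m]\rangle$, so it suffices to identify $\Gamma(n,m)$ with this quotient of the repetition quiver.

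For step (i), I would count tagged $m$-arcs: those with $i\neq j$ come in $\tau_m$-orbits of size $N/\gcd(N,m) = N$ (since $\gcd(N,m)=1$ by $N=mn-m+1$), giving $\binom{n-1}{1}$ orbits from the residue condition $j-i\equiv 1\pmod m$; the tagged arcs $D_{ii}^\pm$ contribute further orbits whose length depends on the parity of $m$ via the tag swap. Adding up, the total matches the known number of indecomposables in $\C_{D_n}^m$, namely $n + m\binom{n}{2}$. For step (ii), I would check directly from the definition of $\tau_m$ that a uniform anti-clockwise rotation by $m$ steps on untagged arcs plus the parity-dependent tag swap on $D_{ii}^\pm$ realises the action of $\tau^{-1}[m]$ on the corresponding indecomposables: the key point is that when $m$ is odd the two "tagged" $\tau$-orbits above the branching vertex of $D_n$ fuse into one orbit of double length, whereas for even $m$ they stay separate, which is precisely what the tag-changing rule accomplishes.

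For step (iii), I would verify that the four types of $m$-moves in Definition \ref{Defmove} produce exactly the meshes of the AR-quiver. Forms (1) and (2) give the irreducible maps along "type $A$" subquivers of $D_n$ (sliding one endpoint clockwise by $m$ around the boundary), while forms (3) and (4) give the two irreducible maps adjacent to the branching vertex; a local check around each tagged $m$-arc shows that the incoming and outgoing $m$-moves together with $\tau_m$ fit into exactly one AR-mesh, so $\Gamma(n,m)$ is a translation quiver and the vertex bijection from step (i) promotes to an isomorphism of translation quivers.

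I expect the main obstacle to be the bookkeeping around the tagged arcs $D_{ii}^{\pm}$: these correspond to the exceptional indecomposables sitting above the branching vertex of $D_n$, and a careful parity analysis is required to see that the tag-swap convention in the definition of $\tau_m$ is precisely what is needed for the meshes at $D_{ii}^\pm$ to close up correctly, and for the mesh relations to match those of $\mathbb{Z} D_n / \langle \tau^{-1}[m]\rangle$. Once this subtle case is handled, the remaining verifications are straightforward combinatorics on the punctured $N$-gon.
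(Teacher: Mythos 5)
The paper does not actually prove this theorem: it is quoted directly from Baur and Marsh \cite[Theorem~4.5]{BM1}, so there is no in-paper argument to compare your proposal against. That said, your outline does follow what is, in essence, the strategy of that reference: identify the AR-quiver of $\C_{D_{n}}^{m}$ with $\mathbb{Z}D_n/\langle\tau^{-1}[m]\rangle$, build a translation-compatible vertex bijection, and verify that the four types of $m$-moves carve out exactly the AR-meshes, with the tag swap for $D_{ii}^{\pm}$ under $\tau_m$ being the delicate parity point that makes the meshes close up.

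However, the counting in your step (i) contains errors that would derail the bijection if carried out literally. Type $D_n$ has $n(n-1)$ positive roots, not $\binom{n}{2}$, so the number of indecomposables in $\C_{D_{n}}^{m}$ is $m\,n(n-1)+n$ rather than $n+m\binom{n}{2}$; for instance $\C_{D_4}^{2}$ has $2\cdot 4\cdot 3+4=28$ indecomposables, which is indeed the number of vertices visible in Figure~\ref{AR-quiver}. On the geometric side, the non-tagged $m$-arcs fall into $n-2$ (not $n-1$) $\tau_m$-orbits, each of length $N$ since $\gcd(N,m)=1$; together with the $2N$ tagged arcs this gives $(n-2)N+2N=nN=n(mn-m+1)=m\,n(n-1)+n$ vertices of $\Gamma(n,m)$, consistent with the algebraic count. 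These are bookkeeping slips rather than a wrong method, but as written the bijection you assert in step (i) does not exist, and fixing the counts is a prerequisite for steps (ii) and (iii) to be carried through.
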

\begin{example}
Suppose $m=2, n=4$. We draw the AR-quiver of $\C_{D_{4}}^{2}$ in a punctured $7$-gon by Theorem \ref{equi} (see Fig. \ref{AR-quiver}).
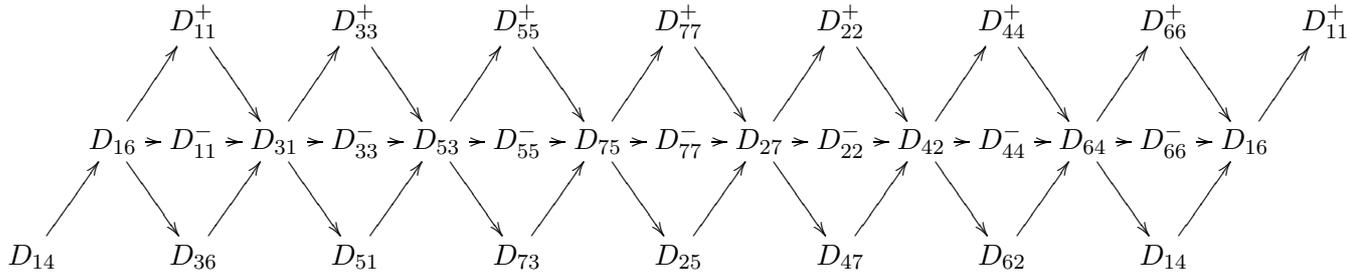
\begin{figure}
\centering
$$
{
\xymatrix@-5mm@C 0.2cm{
    &&D_{11}^{+}\ar[rdd] & & D_{33}^{+}  \ar[rdd] & & D_{55}^{+} \ar[rdd] && D_{77}^{+} \ar[rdd] && D_{22}^{+}\ar[rdd] && D_{44}^{+}\ar[rdd] && D_{66}^{+}\ar[rdd] && D_{11}^{+}\\
 \\
  &D_{16} \ar[ruu]\ar[rdd]\ar[r]&D_{11}^{-}\ar[r]& D_{31} \ar[ruu]\ar[rdd]\ar[r]&D_{33}^{-}\ar[r]& D_{53}\ar[ruu]\ar[rdd]\ar[r]&D_{55}^{-}\ar[r]& D_{75}\ar[ruu]\ar[rdd]\ar[r]&D_{77}^{-}\ar[r]& D_{27}
  \ar[ruu]\ar[rdd]\ar[r]&D_{22}^{-}\ar[r]& D_{42}\ar[ruu]\ar[rdd]\ar[r]&D_{44}^{-}\ar[r]& D_{64} \ar[ruu]\ar[rdd]\ar[r]&D_{66}^{-}\ar[r]& D_{16}\ar[ruu]\\
  \\
  D_{14} \ar[ruu]&& D_{36} \ar[ruu] && D_{51} \ar[ruu]&&D_{73} \ar[ruu]&& D_{25} \ar[ruu]&& D_{47}\ar[ruu] &&D_{62} \ar[ruu] &&D_{14}\ar[ruu]&&\\
\\
}
}
$$
\caption{The AR-quiver of  $\C_{D_{4}}^{2}$}
\label{AR-quiver}
\end{figure}

\end{example}
Now we are ready to give a new combinatorial model in a regular $2N$-gon without puncture, as in Fomin and Zelevinsky's work \cite{FZ} when $m=1$, and establish a connection with Baur and Marsh's combinatorial model \cite{BM1}.

Let $P$ be a regular $2N$-gon. We label the vertices of $P$ clockwise by $1,2,\ldots, 2N$ consecutively. In our arguments below vertices will also be numbered by some $r\in\mathbb{N}$ which might not be in the range
$1\le r\le 2N$. In this case the numbering of vertices always has
to be taken modulo $2N$.

We define an  arc in $P$ to be a set $\{i,j\}$ of vertices of
$P$ with $j\not\in \{i-1,i,i+1\}$, i.e.
$i$ and $j$ are different and non-neighboring vertices. For the sake of convenience, we denote the arc $\{i, j\}$ by $(i, j)$, or just $ij$ without confusion in this section.
There are two types of arcs in $P$: diameter, i.e. the arc connecting two opposite vertices $i$ and $i+N$, and  non-diameter, which is the form $(i,j)$ where $j\in [i+2,i+N-1]$. Note that $[i+2,i+N-1]$ stands for the set of vertices
of $P$ which are met when going clockwise from $i+2$ to $i+N-1$ on the boundary of $P$.

We need two different copies of diameters and denote them by $(i,i+N)_g$ and $(i,i+N)_r$ respectively, where $1\le i\le 2N$. The indices indicate that these diameters are coloured in the colours green and red. For better visibility, we draw the red diameters in a wavelike form and the green ones as straight lines. We use the same notations as in \cite{HJR3}.
\begin{example}
Let $m=2, n=4$. Then $2N=14$. It is easy to see $16, 38, 36,$ $(1,10), (8,13)$ and $(10,13)$ are non-diameter arcs, both $(1,8)_g$ and $(1,8)_r$ are diameter arcs. (See Fig. \ref{14-gon}).
\end{example}
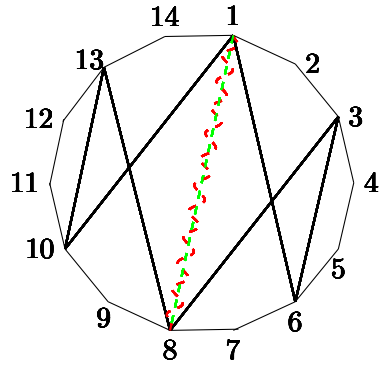
\begin{figure}
\begin{center}
\begin{tikzpicture}[scale=2]
\begin{scope}
    \foreach \x in {-26,0,26,52,78,104,130,155,180,206,232,258,282,308}{
        \draw (\x:1 cm) -- (\x + 26: 1cm) -- cycle;
        \draw [thick](78:1cm)--(308:1cm);
        \draw [thick](26:1cm)--(258:1cm);
        \draw [thick](78:1cm)--(206:1cm);
        \draw [thick](258:1cm)--(130:1cm);
        \draw [thick](26:1cm)--(308:1cm);
        \draw [thick](206:1cm)--(130:1cm);
        \draw (78:1) edge[thick, dashed, decorate, decoration=snake, color=red!] (258:1);
\draw (78:1) edge[thick, dashed, color=green!] (258:1);

         \node[right][below] at (-26:1cm) {5};
         \node[right] at (0:1cm) {4};
         \node[right] at (26:1cm) {3};
         \node[right] at (52:1cm) {2};
         \node[above] at (78:1cm) {1};
         \node[above] at (104:1cm) {14};
         \node[left] at (125:1cm) {13};
         \node[left] at (155:1cm) {12};
         \node[left] at (180:1cm) {11};
         \node[left] at (206:1cm) {10};
         \node[below] at (230:1cm) {9};
         \node[below] at (258:1cm) {8};
         \node[below] at (282:1cm) {7};
         \node[below] at (308:1cm) {6};

}
\end{scope}
\end{tikzpicture}
\caption{Arcs in a regular $14$-gon}
\label{14-gon}
\end{center}
\end{figure}

Next, we give the definition of $m$-arcs in  $P$.
\begin{definition}\label{A}
Let $P$ be a regular $2N$-gon and $(i,j)$ be an arc in $P$. When $i\neq j$, we say that $(i,j)$ is an $m$-arc if $j-i\equiv 1$ mod $m$; When $i=j$, we say the diameters $(i,i+N)_r$ and $(i,i+N)_g$ are $m$-arcs for arbitrary $i=1,2,\cdots,N$.
\end{definition}
\begin{example}
Let $m=2, n=4$ and $P$ be a regular $14$-gon. (See Fig. \ref{14-gon}).
The arc $16$ is a $2$-arc, because $6-1\equiv 1$ mod 2.
Similarly, we have $(3,6), (3,8), (1,10), (8,13)$ and $(10,13)$ are $2$-arcs, the diameters $(1,8)_g$ and $(1,8)_r$ are also $2$-arcs.
\end{example}

Note that every non-diameter arc $(i,j)$ in $P$ has a partner arc $(i+N,j+N)$, which is obtained by a rotation of 180 degrees. The partner arc of a diameter is still itself. If $(i,j)$ is an $m$-arc, so is $(i+N,j+N)$. We denote the pair of arcs $\{(i,j),(i+N,j+N)\}$ by $\overline{(i,j)}$. For example, $\overline{(1,6)}=\overline{(8,13)}$, $\overline{(3,6)}=\overline{(10,13)}$, $\overline{(3,8)}=\overline{(1,10)}$, and $\overline{(1,8)_r}=\overline{(1,8)_r}$, see Fig. \ref{14-gon}.

\begin{definition}
Let $P$ be a regular $2N$-gon. If there are two $m$-arcs in $P$ with a common endpoint such that the two $m$-arcs and a part of the boundary form an $(m+2)$-gon, we say there is an $m$-move taking the first $m$-arc to the second in the clockwise direction.
\end{definition}

\begin{example}
Let $m=2, n=4$, and $P$ be a regular $14$-gon. (See Fig. \ref{14-gon}).
\begin{itemize}
  \item [(1)] There is a  $2$-move $16\rightarrow 36$.
  \item [(2)] There is a  $2$-move $36\rightarrow 38$.
  \item [(3)] There are two  $2$-moves $16\rightarrow 18_r$ and $16\rightarrow 18_g$.
  \item [(4)] There are two  $2$-moves $18_r\rightarrow 38$ and $18_g\rightarrow 38$.
\end{itemize}

\end{example}
 The action of $\tau_m$ on a non-diameter $m$-arc is rotation by $m$ vertices in the counterclockwise direction, i.e. $\tau_m(i,j)=(i-m,j-m)$. The action of $\tau_m$ on diameters is rotation by $m$ vertex in the counterclockwise direction if $m$ is even, and rotation by $m$ vertex in the counterclockwise direction and changing their colour if $m$ is odd, i.e. $\tau_m(i,i+N)_{g,r}=(i-m,i+N-m)_{g,r}$ if $m$ is even, and $\tau_m(i,i+N)_{g,r}=(i-m,i+N-m)_{r,g}$ if $m$ is odd.

Let $\Delta(n,m)$ be the quiver whose vertices are the paired $m$-arcs $\overline{(i,j)}$, and whose arrows are given by $m$-moves. Then we have the following result.
\begin{proposition}\label{Pro1}
$\Delta(n,m)$ is a translation quiver isomorphic to $\Gamma(n,m)$. Moreover, $\Delta(n,m)$ is isomorphic to the AR-quiver of $\C_{D_{n}}^{m}$.
\end{proposition}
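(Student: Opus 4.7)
The plan is to build an explicit isomorphism of translation quivers $\phi\colon\Delta(n,m)\to\Gamma(n,m)$; the second assertion then follows by composing with the isomorphism of Theorem \ref{equi}. I define $\phi$ on vertices by choosing, for each non-diameter paired $m$-arc $\overline{(i,j)}$, the representative with $i\in\{1,\ldots,N\}$ and setting $\phi(\overline{(i,j)})=D_{i,\bar j}$, where $\bar j\in\{1,\ldots,N\}$ denotes the reduction of $j$ modulo $N$; for diameters, $\phi((i,i+N)_g)=D_{ii}^{+}$ and $\phi((i,i+N)_r)=D_{ii}^{-}$ for $i\in\{1,\ldots,N\}$. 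The representative with $i\in[1,N]$ is unique, $\bar j\notin\{i,i+1\}$, and the $m$-arc conditions on the two sides agree (using $\bar j-i=j-i$ when $j\le N$ and $\bar j+N-i=j-i$ when $j>N$). Consequently $\phi$ is a bijection onto the tagged $m$-arcs.

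Next I verify that $\phi$ transports $m$-moves. A non-diameter-to-non-diameter move $(i,j)\to(i,j+m)$ in $P$ maps to $D_{i,\bar j}\to D_{i,\overline{j+m}}$, which is a type (1) move of Definition \ref{Defmove} since $\overline{j+m}\equiv\bar j+m\pmod N$; the moves $(i,j)\to(i+m,j)$ give type (2) analogously. The essential case is when $j+m=i+N$: the two moves $(i,j)\to(i,i+N)_g$ and $(i,j)\to(i,i+N)_r$ in $P$ map to $D_{i,\bar j}\to D_{ii}^{+}$ and $D_{i,\bar j}\to D_{ii}^{-}$, which are precisely the two type (3) moves in $\Gamma(n,m)$. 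The symmetric analysis starting from a diameter accounts for type (4), and reversing these constructions shows that every $m$-move in $\Gamma(n,m)$ comes from a unique $m$-move in $\Delta(n,m)$.

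For the translations, if $\overline{(i,j)}$ is non-diameter then $\phi\tau_m\overline{(i,j)}=\phi\overline{(i-m,j-m)}=D_{i-m,\overline{j-m}}$ and $\tau_m\phi\overline{(i,j)}=\tau_m D_{i,\bar j}=D_{i-m,\bar j-m}$, which agree modulo $N$. For diameters one splits on the parity of $m$: when $m$ is even, $\tau_m$ preserves colour on the $\Delta$-side and preserves tag on the $\Gamma$-side, so both agree under the convention $g\leftrightarrow+$, $r\leftrightarrow-$; when $m$ is odd, the colour swap $g\leftrightarrow r$ on the $\Delta$-side exactly matches the tag swap $+\leftrightarrow-$ on the $\Gamma$-side. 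Hence $\phi\circ\tau_m=\tau_m\circ\phi$, and $\phi$ is an isomorphism of translation quivers.

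The main obstacle I expect is the bookkeeping around the equator $\{N,N+1\}$ of $P$: a non-diameter arc whose second coordinate exceeds $N$ projects to an arc of $\Pi$ that wraps past vertex $N$, and the two syntactic forms of the $m$-arc condition in Definition \ref{Defm} have to be reconciled with the single form in Definition \ref{A}. Equally, the doubling of type (3) and (4) moves at diameter arcs in $\Gamma(n,m)$ has to be matched with the two coloured diameters in $P$. Once these case distinctions are organised, the remaining verifications are direct comparisons.
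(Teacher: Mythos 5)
Your proposal is correct and follows essentially the same route as the paper: write down an explicit bijection between paired $m$-arcs of $P$ and tagged $m$-arcs of $\Pi$, then check that it intertwines $m$-moves and $\tau_m$, and conclude via Theorem \ref{equi}. The paper defines the map in the direction $\Gamma(n,m)\to\Delta(n,m)$ and states the verifications as "easy to prove," whereas you construct the inverse and carry out the bookkeeping; you also use the opposite convention for matching colours to tags ($g\leftrightarrow +$, $r\leftrightarrow -$ rather than the paper's $r\leftrightarrow +$, $g\leftrightarrow -$), which is immaterial since either identification yields a valid translation-quiver isomorphism.
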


\begin{proof}
Let $\varphi:\Gamma(n,m)\rightarrow\Delta(n,m)$ be a morphism of quivers. Suppose $D_{ij}$ is a tagged $m$-arc. If $i<j$, we define $\varphi(D_{ij})=\overline{(i,j)}$; If $i>j$,  we define $\varphi(D_{ij})=\overline{(i,j+N)}$; If $i=j$, we define $\varphi(D_{ii}^+)=\overline{(i,i+N)_r}=(i,i+N)_r$ and $\varphi(D_{ii}^-)=\overline{(i,i+N)_g}=(i,i+N)_g$. It is easy to prove $\varphi$ is an isomorphism and satisfy $\varphi\circ\tau_m=\tau_m\circ\varphi$.  By Theorem \ref{equi}, $\Delta(n,m)$ is isomorphic to the AR-quiver of $\C_{D_{n}}^{m}$, i.e. $\Delta(n,m)$ can be seen as a new combinatorial model of $\C_{D_{n}}^{m}$.
\end{proof}
 As a direct consequence of Proposition \ref{Pro1}, we have the following.
\begin{proposition}
There is a bijection between $m$-arcs $\overline{(i,j)}$ in $P$ and indecomposable objects in  $\mathrm{ind}\;\C_{D_{n}}^{m}$.
\end{proposition}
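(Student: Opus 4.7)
The statement is a direct corollary of Proposition \ref{Pro1}, so the plan is almost purely bookkeeping. The plan is to appeal to the quiver isomorphism $\varphi:\Gamma(n,m)\to\Delta(n,m)$ already constructed, combined with Theorem \ref{equi} which identifies $\Gamma(n,m)$ with the AR-quiver of $\C_{D_{n}}^{m}$. Since $\C_{D_{n}}^{m}$ is Krull--Schmidt by Proposition \ref{facts:m-cluster category}, its indecomposable objects (up to isomorphism) are exactly the vertices of the AR-quiver, so the composition gives the desired bijection on vertex sets.

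Concretely, I would write the bijection as $\overline{(i,j)}\mapsto \varphi^{-1}(\overline{(i,j)})$, viewed as a vertex of the AR-quiver and hence as an isoclass of an indecomposable. The three cases to spell out mirror the definition of $\varphi$ in the proof of Proposition \ref{Pro1}: a non-diameter $m$-arc $\overline{(i,j)}$ with $i<j<i+N$ corresponds to the tagged $m$-arc $D_{ij}$ (or $D_{i,j-N}$ if $j>N$) in Baur--Marsh's punctured model, and the two coloured diameters $(i,i+N)_{r}$ and $(i,i+N)_{g}$ correspond to the two tagged arcs $D_{ii}^{+}$ and $D_{ii}^{-}$ at the puncture. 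Each of these tagged $m$-arcs is, by Theorem \ref{equi}, a vertex of the AR-quiver of $\C_{D_{n}}^{m}$ and hence labels a unique isoclass of an indecomposable object.

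There is really no obstacle; the only subtlety worth double-checking is the convention on diameters. In Definition \ref{A} the two colours $(i,i+N)_{r}$ and $(i,i+N)_{g}$ are introduced precisely to match the two tagged arcs $D_{ii}^{\pm}$ at the puncture, and the rule that $\tau_{m}$ swaps the two colours when $m$ is odd matches the rule that $\tau_{m}$ flips the tag of $D_{ii}^{\pm}$ when $m$ is odd. Because $\varphi$ is defined to identify these two conventions, injectivity and surjectivity of the induced map on vertex sets is immediate, and the proof reduces to one line citing Proposition \ref{Pro1}.
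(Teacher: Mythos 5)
Your proof is correct and follows exactly the paper's route: the paper simply states the proposition as a direct consequence of Proposition \ref{Pro1}, and you cite Proposition \ref{Pro1} (together with Theorem \ref{equi} and the Krull--Schmidt property) to read off the vertex-set bijection, which is the same argument spelled out in slightly more detail.
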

The bijection above induces a bijection between the subcategories of $\C_{D_{n}}^{m}$ and the sets of $m$-arcs in $P$ which are invariant
  under rotation of $180$ degrees. For a subcategory $\X$ of $\C_{D_{n}}^{m}$, we denote the corresponding set of $m$-diagonals in $P$ by $\XX$. For the paired $m$-arcs $\overline{(i,j)}$, we sometimes write $(i,j)$ without confusion in the rest of the paper.  For the $m$-arc $u=(i,j)$ in $P$, We denote the corresponding indecomposable object in $\C_{D_{n}}^{m}$ by $M_u$. We sometimes use an $m$-arc $(i,j)$ to represent an indecomposable object $M$ in $\C_{D_{n}}^{m}$ without confusion, denote by $M=(i,j)$.

\begin{example}
Suppose $m=2, n=4$. We draw the AR-quiver of $\C_{D_{4}}^{2}$ in a regular $14$-gon (see Fig. \ref{AR-quiver1} and compare with Fig. \ref{AR-quiver}).
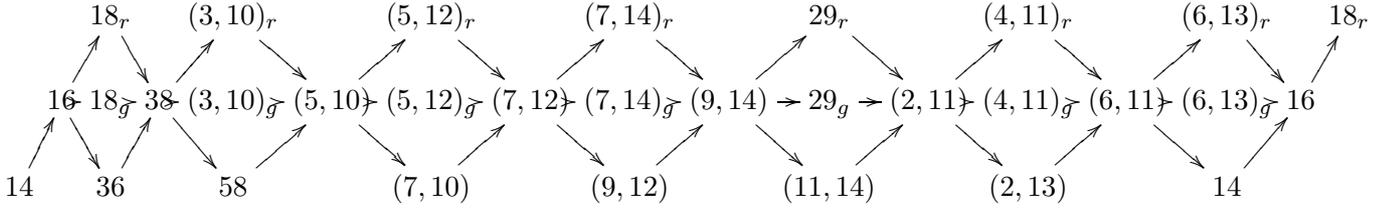
\begin{figure}
\centering
$$
{
\xymatrix@-7mm@C -0.2cm{
    &&18_{r}\ar[rdd] & & (3,10)_r  \ar[rdd] & & (5,12)_r \ar[rdd] && (7,14)_r \ar[rdd] && 29_r\ar[rdd] && (4,11)_r\ar[rdd] && (6,13)_r\ar[rdd] && 18_{r}\\
 \\
  &16 \ar[ruu]\ar[rdd]\ar[r]&18_{g}\ar[r]& 38 \ar[ruu]\ar[rdd]\ar[r]&(3,10)_g\ar[r]& (5,10)\ar[ruu]\ar[rdd]\ar[r]&(5,12)_g\ar[r]& (7,12)\ar[ruu]\ar[rdd]\ar[r]&(7,14)_g\ar[r]& (9,14)
  \ar[ruu]\ar[rdd]\ar[r]&29_g\ar[r]& (2,11)\ar[ruu]\ar[rdd]\ar[r]&(4,11)_g\ar[r]& (6,11) \ar[ruu]\ar[rdd]\ar[r]&(6,13)_g\ar[r]& 16\ar[ruu]\\
  \\
  14 \ar[ruu]&& 36 \ar[ruu] && 58 \ar[ruu]&& (7,10) \ar[ruu]&& (9,12) \ar[ruu]&& (11,14)\ar[ruu] && (2,13) \ar[ruu] && 14\ar[ruu]&&\\
\\
}
}
$$
\caption{Another description of the AR-quiver of  $\C_{D_{4}}^{2}$}
\label{AR-quiver1}
\end{figure}

\end{example}

\subsection{Torsion pairs in a triangulated category}
We recall some known facts about torsion pairs in a triangulated category based on \cite{BMRRT,IY,N}.

\begin{definition}\label{dftor}
Let $\X, \Y$ be subcategories of a triangulated category $\D$.
\begin{itemize}
  \item [(1)] The pair $(\X, \Y)$ is called a torsion pair if
  $$\Hom_{\D}(\X, \Y)=0\;\text{and}\;\D=\X\ast\Y. $$
  \item [(2)] The pair $(\X, \Y)$ is called a cotorsion pair if
  $$\Ext^{1}_{\D}(\X, \Y)=0\;\text{and}\;\D=\X\ast\Y[1]. $$
\end{itemize}
\end{definition}
\begin{remark}\label{remark3}
$(\X,\Y)$ is a torsion pair if and only if $(\X,\Y[-1])$ is a cotorsion pair.
\end{remark}

\begin{lemma}\cite[Proposition 2.3]{IY}\label{add}
Let $\X$ be a subcategory of $\C_{D_{n}}^{m}$. Then the following statements are equivalent.
\begin{itemize}
  \item [(1)] $\X$ is closed under extensions.
  \item [(2)] $(\X,\X^\perp)$ is a torsion pair.
  \item [(3)] $\X={^\perp(\X^\perp)}$.
\end{itemize}
\end{lemma}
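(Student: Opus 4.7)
The plan is to prove the equivalences via the cycle $(2) \Rightarrow (3) \Rightarrow (1) \Rightarrow (2)$. The key ambient fact I will use is that, since $D_n$ is Dynkin, $\C_{D_{n}}^{m}$ is Hom-finite and Krull-Schmidt with only finitely many indecomposables, so every subcategory is functorially finite and admits a minimal right approximation for each object.

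The first two implications are elementary triangle manipulations. For $(2) \Rightarrow (3)$, the inclusion $\X \subseteq {}^\perp(\X^\perp)$ is automatic from the definition; conversely, for $M \in {}^\perp(\X^\perp)$ I would invoke the torsion triangle $X \to M \xrightarrow{v} Y \to X[1]$ and observe that $v = 0$ since $M \in {}^\perp(\X^\perp)$ and $Y \in \X^\perp$, which forces $M$ to be a direct summand of $X$ and hence in $\X$ by closure under direct summands. For $(3) \Rightarrow (1)$, given a triangle $X_1 \to M \to X_2 \to X_1[1]$ with $X_1, X_2 \in \X$, applying $\Hom(-, Y)$ for any $Y \in \X^\perp$ makes the two flanking Hom-groups vanish, so $\Hom(M, Y) = 0$ and $M \in {}^\perp(\X^\perp) = \X$.

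The substantive step is $(1) \Rightarrow (2)$, which is where the main obstacle lies. The orthogonality $\Hom(\X, \X^\perp) = 0$ is tautological, so everything reduces to producing the decomposition $\C_{D_{n}}^{m} = \X \ast \X^\perp$. For each $M$ I would take a minimal right $\X$-approximation $u: X \to M$ and complete it to a triangle $X \xrightarrow{u} M \xrightarrow{v} Y \xrightarrow{w} X[1]$; the claim is then $Y \in \X^\perp$. Given $f: X' \to Y$ with $X' \in \X$, I would complete $wf$ to a triangle $X \xrightarrow{a} E \xrightarrow{b} X' \xrightarrow{wf} X[1]$, whose middle term lies in $\X$ by extension closure. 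A morphism-of-triangles axiom produces $e: E \to M$ with $ea = u$ and $ve = fb$; the approximation property factors $e = uh$ for some $h: E \to X$, and minimality applied to $u(ha) = u$ forces $ha$ to be an automorphism of $X$. Hence $a$ is split mono, the triangle containing $wf$ splits, and $wf = 0$; so $f$ lifts as $v\tilde f$ for some $\tilde f: X' \to M$, and factoring $\tilde f$ through $u$ once more together with $vu = 0$ yields $f = 0$. The delicate part is precisely this interplay between the approximation factoring and the minimality condition, and the only place where the finiteness of $\C_{D_{n}}^{m}$ is genuinely used is in securing the existence of minimal right approximations.
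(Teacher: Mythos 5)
Your proof is correct and follows essentially the standard argument: the paper itself gives no proof but cites Iyama--Yoshino, Proposition 2.3, whose proof of the substantive implication $(1)\Rightarrow(2)$ is exactly this Wakamatsu-lemma-style computation with a minimal right approximation, and the easy implications $(2)\Rightarrow(3)\Rightarrow(1)$ are handled the same way. The one point worth recording is that you correctly identified where finiteness of $\C_{D_n}^m$ enters: it guarantees that every subcategory is contravariantly finite so that minimal right $\X$-approximations exist, which is the hypothesis under which IY prove the result in general.
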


\section{Ptolemy diagrams in  $P$ and torsion pairs in $\C_{D_{n}}^{m}$}
In this section, we define Ptolemy diagrams of type $D$ in $P$ and give a complete classification of torsion pairs in $\C_{D_{n}}^{m}$ when $m$ is odd. Firstly, we give the definition of crossing $m$-arcs in $P$. Note that the translation functor $\tau_m$ acts on diameter arcs is rotation by $m$ vertex in the counterclockwise direction and changing their colour if $m$ is odd. We suppose $m\in\mathbb{Z}_{\geq 1}$ is odd in the rest of the paper.

\subsection{Ptolemy diagrams in $P$}
\begin{definition}\label{c1}
\begin{enumerate}
\item[{(a)}] We say that two non-diameter $m$-arcs $(i,j)$ and
$(k,\ell)$ cross precisely if the elements $i,j,k,\ell$ are all
distinct and come in the order $i, k, j, \ell$
when moving around $P$ in one direction or the other
(i.e. counterclockwise or clockwise). In particular, the two
arcs in $\overline{(i,j)}$ do not cross.

Similarly, in the case $j=i+N$, the above condition defines
when a diameter arc $(i,i+N)_g$ (or $(i,i+N)_r$) crosses
the non-diameter $m$-arc $(k,\ell)$.
\item[{(b)}] We say that two non-diameter $m$-arcs $\overline{(i,j)}$ and
$\overline{(k,\ell)}$ cross if
there exist two arcs in these two pairs which cross in the sense of
part (a). (Note that then necessarily the other two rotated $m$-arcs also
cross.)

Similarly, the diameter $(i,i+N)_g$ (or $(i,i+N)_r$) crosses
the non-diameter arcs  $\overline{(k,\ell)}$  if it
crosses one of the arcs in $\overline{(k,\ell)}$.
(Note that it then necessarily crosses both arcs in $\overline{(k,\ell)}$.)
\item[{(c)}] Two different diameters $(i,i+N)_g$ and $(j,j+N)_r$ of different colour  cross, $j\not\in \{i,i+N\}$. But $(i,i+N)_g$ and $(i,i+N)_r$ do not cross. Moreover, any diameters of the same colour do not cross.
\end{enumerate}
\end{definition}
\begin{lemma}\cite[Proposition 2]{T}
Suppose $u, v$ are two $m$-arcs in $P$ which are invariant under rotation of $180$ degrees, $M_u, M_v$ are the corresponding indecomposable objects in $\C_{D_{n}}^{m}$. Then
\begin{itemize}
  \item [(1)] $u$ does not cross $v$ if and only if  $\Ext^{i}_{\C_{D_{n}}^{m}}(M_u,M_v)=0$ for all $1\leq i\leq m$.
  \item [(2)] $M_u[m]=M_{\tau_mu}$
\end{itemize}
\end{lemma}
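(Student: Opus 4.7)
My plan is to prove both parts by combining the orbit-category description of $\C_{D_{n}}^{m}$ with the translation-quiver isomorphism $\varphi:\Gamma(n,m)\to\Delta(n,m)$ of Proposition \ref{Pro1}. For part (2), the key observation is that in $\C_{D_{n}}^{m}=\D^b(H)/\tau^{-1}[m]$ the autoequivalence $\tau^{-1}[m]$ becomes the identity, so $[m]=\tau$ on $\C_{D_{n}}^{m}$. Since Proposition \ref{Pro1} shows that $\varphi$ intertwines the combinatorial translation $\tau_m$ with the Auslander--Reiten translate $\tau$, we conclude $M_u[m]=\tau M_u=M_{\tau_m u}$, which is exactly part (2).

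For part (1), I would use Serre duality together with part (2) to set up the framework. Since $\C_{D_{n}}^{m}$ is $(m+1)$-Calabi--Yau, we have $\Ext^i(M_u,M_v)\cong D\Ext^{m+1-i}(M_v,M_u)$, so vanishing of all $\Ext^i(M_u,M_v)$ for $1\le i\le m$ is symmetric in $u$ and $v$, matching the evident symmetry of the crossing relation in Definition \ref{c1}. Writing $\Ext^i(M_u,M_v)=\Hom(M_u,M_v[i])$, the problem reduces to computing Hom-hammocks in the AR-quiver of $\C_{D_{n}}^{m}$. A clean route is to transport the statement along $\varphi$ to Baur and Marsh's tagged-arc model $\Gamma(n,m)$, apply the corresponding crossing/Hom-criterion there (which for non-tagged arcs reduces to the classical type-$A$ picture), and then translate the answer back to $P$. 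The argument splits naturally along the cases of Definition \ref{c1} according to whether each of $u,v$ is a diameter or a non-diameter, with the non-diameter/non-diameter case reducing, after accounting for the $180^{\circ}$-symmetry of paired arcs, to the analogous criterion already understood in type-$A$ $m$-cluster categories.

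The main obstacle will be the bookkeeping in the diameter cases, particularly clause (c) of Definition \ref{c1}. Because $\tau_m$ flips the colour of a diameter whenever $m$ is odd, the sequence $M_v,M_v[1],\ldots,M_v[m-1]$ yields arcs whose diameter component alternates colour, so the crossing/non-crossing of a fixed diameter $(i,i+N)_g$ with $v$ must match the vanishing/non-vanishing of exactly one specific $\Hom(M_u,M_v[i])$ in the range $1\le i\le m$; only at $i=m$ do we have the clean identification $M_v[m]=M_{\tau_m v}$ from part (2), so the intermediate shifts require a separate analysis, most conveniently performed by lifting to $\D^b(H)$ and projecting back. Getting these parities correct, and simultaneously respecting the identification $\overline{(i,j)}=\{(i,j),(i+N,j+N)\}$ so that crossing of either arc in a pair forces crossing of both, is the genuinely delicate point; everything else reduces to routine hammock verifications via $\varphi$ and Theorem \ref{equi}.
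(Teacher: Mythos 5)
The paper does not prove this lemma at all: it is cited verbatim from Thomas~\cite{T} (Proposition~2 there), so there is no internal proof in the paper against which to compare your argument. With that said, here is how your attempt stands on its own.

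Your treatment of part~(2) is essentially complete and correct: in the orbit category $\D^b(H)/\tau^{-1}[m]$ the functor $\tau^{-1}[m]$ is trivialised, hence $[m]\cong\tau$, and since Proposition~\ref{Pro1} shows $\varphi\circ\tau_m=\tau_m\circ\varphi$ while identifying the combinatorial translation with the AR-translate of $\C_{D_n}^m$, you get $M_u[m]=\tau M_u=M_{\tau_m u}$. That is a clean argument.

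For part~(1), however, what you have written is a plan rather than a proof, and the gap is precisely where you say it is. You correctly reduce, via Serre duality, to a symmetric statement and observe that it suffices to control $\Hom(M_u,M_v[i])$ for $1\le i\le m$; but you do not actually establish the crossing/Hom dictionary. In particular, Definition~\ref{c1}(c) builds in two non-geometric conventions --- two distinct diameters of the \emph{same} colour never cross, and the two diameters on the same pair of endpoints but of different colours do not cross --- and these are exactly the cases where the $\Ext$-hammock computation must be done by hand. Your remark that the intermediate shifts $M_v[1],\dots,M_v[m-1]$ alternate colour and ``require a separate analysis, most conveniently performed by lifting to $\D^b(H)$'' flags the issue but does not resolve it; nor does ``routine hammock verification'' suffice as a justification for the same-colour diameter case, which is genuinely the subtle one (it corresponds to the two spike-components of the AR-quiver). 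Likewise, invoking ``the corresponding crossing/Hom-criterion in $\Gamma(n,m)$'' transports the burden to a statement you have neither cited nor proved. To close the argument you would either need to carry out the hammock computation in the diameter cases explicitly (including the parity interaction between the colour flip and $[1]$), or --- as the paper does --- cite Thomas's result directly.
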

Next we give a criterion to discriminate when an arc is an $m$-arc in $P$. Firstly, we need the following definition.
\begin{definition}\label{def:diatance}
Suppose $u=(i,j), v=(k,\ell)$ are two crossing $m$-arcs in $P$ with $i<k<j<\ell$ or $i<\ell<j<k$.
\begin{itemize}
  \item [(1)] Suppose $i<k<j<\ell$. Then we define the distance from $u$ to $v$ is $(k-i)$ mod $m$, denoted by $d(u,v)=(k-i)\;\mathrm{mod}\;m$ (expressed as a number between 1 and $m$). It is easy to see that $(k-i)$ mod $m$ equals $(\ell-j)$ mod $m$.
  \item [(2)] Suppose $i<\ell<j<k$. Then we define the distance from $u$ to $v$ is $(\ell-i)$ mod $m$ (which equals $(k-j)$ mod $m$), denoted by $d(u,v)=(\ell-i)\;\mathrm{mod}\;m$ (expressed as a number between 1 and $m$).
\end{itemize}

\end{definition}
Note that the definition of distance from $u$ to $v$ is compatible with \cite{T2} for type $A$. It is easy to see $d$ is not symmetrical even if it is called a ``distance". We have $d(u,v)+d(v,u)=m+1$, and $d(u,v)$ is equal to counting the number of elementary moves, clockwise and along the boundary, from $i$ to the closest endpoint of $v$ modulo $m$ (as a number in $[1,m]$). See Fig. \ref{0}.
\begin{figure}
\begin{center}
\begin{minipage}{.5\textwidth}
\centering
\begin{tikzpicture}[scale=2]
\begin{scope}
    \foreach \x in {-36,0,36,72,108,144,180,216,252,288,324}{
        \draw (\x:1 cm) -- (\x + 36: 1cm) -- cycle;
        \draw (72:1cm)--(252:1cm);
        \draw (0:1cm)--(180:1cm);
        \draw (0:1cm)--(180:1cm);
        \draw[dashed][->](0.5,1)arc(60:18:1.7);
        \node[right] at (0:1cm) {$k$};
         \node at (36:1cm) {};
         \node[above] at (72:1cm) {$i$};
         \node at (108:1cm) {$$};
         \node at (144:1cm) {$$};
         \node[left] at (180:1cm) {$\ell$};
         \node at (216:1cm) {$$};
         \node[below] at (252:1cm) {$j$};
         \node at (288:1cm) {$$};
         \node at (324:1cm) {$$};
}
\end{scope}
\end{tikzpicture}
\end{minipage}%
\begin{minipage}{.5\textwidth}
\centering
\begin{tikzpicture}[scale=2]
\begin{scope}
    \foreach \x in {-36,0,36,72,108,144,180,216,252,288,324}{
        \draw (\x:1 cm) -- (\x + 36: 1cm) -- cycle;
        \draw (72:1cm)--(252:1cm);
        \draw (0:1cm)--(180:1cm);
        \draw (0:1cm)--(180:1cm);
        \draw[dashed][->](1.1,-0.1)arc(-10:-90:1.3);
        \node[right] at (0:1cm) {$i$};
         \node at (36:1cm) {};
         \node[above] at (72:1cm) {$k$};
         \node at (108:1cm) {$$};
         \node at (144:1cm) {$$};
         \node[left] at (180:1cm) {$j$};
         \node at (216:1cm) {$$};
         \node[below] at (252:1cm) {$\ell$};
         \node at (288:1cm) {$$};
         \node at (324:1cm) {$$};
}
\end{scope}
\end{tikzpicture}
\end{minipage}
\end{center}
\caption{The distance from $u$ to $v$ is indicated by the dashed arrow, i.e, we have $d(u,v)=(k-i)$ mod m in the left figure , and $d(u,v)=(\ell-i)$ mod m in the right figure. }
\label{0}
\end{figure}
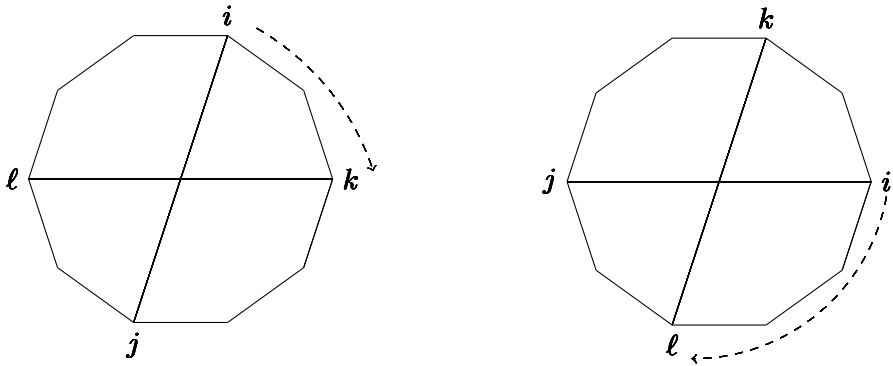
\begin{lemma}\label{B}
Suppose $u=(i,j), v=(k,\ell)$ are two crossing $m$-arcs in $P$. If $i<k<j<\ell$, then $(i,k)$ is an $m$-arc if and only if $d(u,v)=1$ and $k\neq i+1$. If $i<\ell <j<k$, then $(i,k)$ is an $m$-arc if and only if $d(v,u)=1$ and $k\neq i-1$.
\end{lemma}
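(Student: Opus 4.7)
The plan is to reduce both equivalences to elementary modular arithmetic on the integer endpoints. First I would record a useful preliminary fact: since $N=mn-m+1$, we have $N\equiv 1\pmod m$ and hence $2N\equiv 2\pmod m$. Consequently, for any arc $\{a,b\}$ in $P$ the defining congruence $b-a\equiv 1\pmod m$ is equivalent to $(2N-(b-a))\equiv 1\pmod m$, so the $m$-arc property is well-defined on the unordered pair regardless of which endpoint is labelled ``first''. This lets me use either orientation of $v$ freely in what follows.

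For case 1 ($i<k<j<\ell$), $u=(i,j)$ being a non-diameter arc with $i<j$ forces $j-i\le N-1$, and therefore $0<k-i<j-i\le N-1$. So $(i,k)$ is a valid arc precisely when $k\ne i+1$, and in that case the $m$-arc condition reduces to $k-i\equiv 1\pmod m$. By Definition \ref{def:diatance}(1), $d(u,v)=(k-i)\bmod m$ (expressed in $[1,m]$), so $d(u,v)=1$ is exactly the condition $k-i\equiv 1\pmod m$, establishing the first equivalence.

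For case 2 ($i<\ell<j<k$), we have $k>j\ge i+2$, so $k\ne i+1$ is automatic; the only boundary edge to exclude is therefore $k\equiv i-1\pmod{2N}$, which is precisely the hypothesis $k\ne i-1$ in the statement. Using the preliminary fact, $v$ being an $m$-arc gives $k-\ell\equiv 1\pmod m$, and so
\[
k-i=(k-\ell)+(\ell-i)\equiv 1+(\ell-i)\pmod m.
\]
Therefore $(i,k)$ is an $m$-arc if and only if $\ell-i\equiv 0\pmod m$, i.e. $(\ell-i)\bmod m=m$ in $[1,m]$. By Definition \ref{def:diatance}(2) this reads $d(u,v)=m$, and using the identity $d(u,v)+d(v,u)=m+1$ recorded immediately after that definition, this is equivalent to $d(v,u)=1$, as required.

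The whole argument is essentially bookkeeping. The only point I expect to require a moment of care is the boundary analysis, namely checking that ``$k\ne i+1$'' in case 1 and ``$k\ne i-1$'' in case 2 are exactly the exclusions needed for $(i,k)$ to be an arc, and that the ranges of $k-i$ in each case automatically rule out the other boundary being an issue. No deeper obstacle is anticipated.
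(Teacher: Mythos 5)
Your proposal is correct and follows the same route the paper intends, namely reducing the claim to modular arithmetic directly from Definition \ref{A} and Definition \ref{def:diatance}; the paper states only that the lemma ``is a direct consequence'' of those definitions, and you have simply spelled out the bookkeeping. The one genuine extra point you supply, worth keeping, is the preliminary observation that $2N\equiv 2\pmod m$ makes the congruence $b-a\equiv 1\pmod m$ symmetric in $a,b$ (so the $m$-arc condition is well defined on unordered pairs); this is needed in case~2 to pass from $\ell-k$ to $k-\ell$ and is left implicit in the paper.
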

\begin{proof}
This is a direct consequence of  Definition \ref{A} and the definition of the distance.
\end{proof}
The following Lemma in \cite{T2} for type $A$  also holds for type $D$. We establish the result without proof.
\begin{lemma}\cite[Proposition 8]{T2}\label{lemma0}
Suppose $u$ and $v$ are two $m$-arcs in $P$. Then $u$ and $v$ are crossing with $d(u,v)=j$ if and only if $\Ext^{j}_{\C_{D_{n}}^{m}}(M_u,M_v)\neq0$, for some $j\in\{1,\ldots,m\}$.
\end{lemma}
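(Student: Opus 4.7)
\emph{Plan.} By the preceding lemma, $u$ and $v$ fail to cross precisely when $\Ext^{i}_{\C_{D_n}^m}(M_u,M_v) = 0$ for every $i\in\{1,\ldots,m\}$, so in either direction of the biconditional we may assume $u$ crosses $v$. What remains is to pinpoint the unique degree $j$ in which the Ext group is non-zero and to identify it with $d(u,v)$. A useful consistency check is the $(m+1)$-Calabi--Yau structure of $\C_{D_n}^m$ (Proposition \ref{facts:m-cluster category}): Serre duality gives
\[
\Ext^{j}_{\C_{D_n}^m}(M_u, M_v) \;\cong\; D\,\Ext^{m+1-j}_{\C_{D_n}^m}(M_v, M_u),
\]
which matches the identity $d(u,v) + d(v,u) = m+1$ noted after Definition \ref{def:diatance}, so it suffices to prove a single implication, say that crossing with $d(u,v) = j$ forces $\Ext^{j}(M_u,M_v)\ne 0$.

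The approach I would follow is the one used by Torkildsen in type $A$: lift to the cover $\H = \D^b(KD_n)$ through the orbit projection $\H \to \C_{D_n}^m = \H/\tau^{-1}[m]$, so that for chosen preimages $\widetilde{M}_u, \widetilde{M}_v$ one has
\[
\Ext^{j}_{\C_{D_n}^m}(M_u, M_v) \;=\; \bigoplus_{k \in \mathbb{Z}} \Hom_{\H}\!\bigl(\widetilde{M}_u,\;\tau^{-k}\widetilde{M}_v[\,mk+j\,]\bigr).
\]
Each summand on the right is at most one-dimensional, its non-vanishing being detected by the AR-quiver of $\H$. Using that $[m]=\tau$ in the orbit category, so that the rotation $\tau_m$ of arcs is identified with the AR-translate via Proposition \ref{Pro1}, one lifts $u$ and $v$ to representatives in a fundamental domain of a universal cover of $P$ and checks that the number of clockwise elementary boundary moves between the closest endpoints of $u$ and $v$---which is exactly $d(u,v)$ by Lemma \ref{B} and the remark following Definition \ref{def:diatance}---equals the unique $j$ for which the above sum has a non-zero summand.

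The hard part will be the diameter case, because when $m$ is odd the action of $\tau_m$ on a diameter swaps its color, so an indecomposable corresponding to a diameter is represented by arcs of alternating colors along its $\tau_m$-orbit. A case split is unavoidable: (i) both $u, v$ are non-diameter arcs, which reduces essentially to Torkildsen's type-$A$ computation inside the unpunctured $2N$-gon; (ii) exactly one of $u,v$ is a diameter, so crossing is detected by the non-diameter endpoints and the color plays no role in the Ext-degree; (iii) both are diameters, where the convention in Definition \ref{c1}(c)---diameters of the same color never cross, diameters of different colors always cross---must be harmonized with the color-swap produced by $\tau_m$. For case (iii) I would transport the Ext-hammocks through the quiver isomorphism $\varphi\colon\Gamma(n,m)\to\Delta(n,m)$ of Proposition \ref{Pro1} and verify the conclusion directly inside Baur--Marsh's punctured model, where the tagged arcs $D_{ii}^{\pm}$ make the color book-keeping explicit and the $\tau_m$-equivariance of both sides extends the check over the whole AR-orbit.
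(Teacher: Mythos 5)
The paper does not actually prove this lemma: the sentence immediately preceding it reads ``The following Lemma in \cite{T2} for type $A$ also holds for type $D$. We establish the result without proof.'' So there is no in-paper argument to compare against; the paper simply cites Torkildsen's type-$A$ statement \cite[Prop.\ 8]{T2} and asserts the type-$D$ analogue.

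Your plan does identify the right machinery: the orbit-category formula $\Ext^{j}_{\C_{D_n}^m}(M_u,M_v)\cong\bigoplus_{k}\Hom_{\H}(\widetilde M_u,\tau^{-k}\widetilde M_v[mk+j])$, the $(m+1)$-Calabi--Yau duality matching the identity $d(u,v)+d(v,u)=m+1$, and the quiver isomorphism $\varphi$ of Proposition \ref{Pro1} to shuttle between the unpunctured $2N$-gon and Baur--Marsh's punctured model. But it stops at the level of a plan. The diameter cases (ii) and (iii), which you yourself flag as ``the hard part,'' are only described, not carried out: in case (iii) the entire content of the claim is how the color swap performed by $\tau_m$ (for odd $m$) lines up with the degree $j$ in $[mk+j]$, and since $[1]=\tau^{-1}S$ on $\D^b(KD_n)$ permutes the two spin $\tau$-orbits or not depending on the parity of $n$, this must be checked explicitly rather than deferred. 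A further concrete slip: the aside that each summand $\Hom_{\H}(\widetilde M_u,\tau^{-k}\widetilde M_v[mk+j])$ is ``at most one-dimensional'' is false in type $D$ --- already over $KD_4$ there are indecomposables $X,Y$ with $\dim_K\Hom(X,Y)=2$ (e.g.\ the simple at the branch node mapping into the module of highest root dimension vector $(1,2,1,1)$). This is not load-bearing for non-vanishing, but it is a sign that the non-diameter case also requires a genuine $\mathbb{Z}D_n$ hammock analysis, not a reduction ``essentially to Torkildsen's type-$A$ computation.''
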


\begin{definition}\label{c2}
  Let $\X$ be a collection of $m$-arcs of $P$, which is invariant
  under rotation of $180$ degrees.  Then $\X$ is called a
  Ptolemy diagram of type $D$ if it satisfies the following
  conditions.  Let $\alpha = (i,j)$ and $\beta = (k,\ell)$
  be crossing $m$-arcs in $\X$ (in the sense of
  Definition \ref{c1}).
  \begin{enumerate}
  \item[{(Pt1)}] If $\alpha$ and $\beta$ are not diameters, then
  those of $(i,k)$, $(i,\ell)$, $(j,k)$, $(j,\ell)$ which are
    $m$-arcs of $P$ are also in $\X$.
    In particular, if two of the vertices $i,j,k,\ell$ are
    opposite vertices (i.e. one of $k$ and $\ell$ is equal to
    $i+N$ or $j+N$), then
    both the green and the red diameters connecting them are also
    in $\X$.
  \item[{(Pt2)}] If both $\alpha$ and $\beta$ are diameters
  (necessarily of different colours if $m$ is odd by Definition \ref{c1}\,(c)),
  then those of $(i,k)$, $(i,k+N)$, $(i+N,k)$, $(i+N,k+N)$ which are $m$-arcs of $P$ are also in $\X$.
\item[{(Pt3)}] If $\alpha$ is a diameter while $\beta$ is not a diameter,
  then those of $(i,k)$, $(i,\ell)$, $(j,k)$, $(j,\ell)$ which are
    $m$-arcs and do not cross the arc $(k+N,\ell +N)$ are also in
    $\X$. Additionally, the diameters $(k, k+N)$ and
    $(\ell, \ell +N)$ of the same colour as $\alpha$ are
    also in $\X$.
  \end{enumerate}

\end{definition}
These conditions are illustrated in Fig. \ref{10}.
\begin{figure}
  \centering
  \begin{enumerate}
  \item[{(Pt1)}]
  The first Ptolemy condition in type
    $D$:
$$
\begin{tikzpicture}[scale=2]
\begin{scope}
    \foreach \x in {-20,0,20,40,60,80,100,120,140,160,180,200,220,240,260,280,300,320}{
        \draw (\x:1 cm) -- (\x + 20: 1cm) -- cycle;
        \draw [thick](-20:1cm)--(80:1cm);
        \draw [thick](20:1cm)--(120:1cm);
        \draw [thick][dashed](-20:1cm)--(120:1cm);
        \draw [thick][dashed](20:1cm)--(80:1cm);
        \draw [thick][dashed](-20:1cm)--(20:1cm);
        \draw [thick][dashed](80:1cm)--(120:1cm);
        \draw [thick](160:1cm)--(260:1cm);
        \draw [thick](200:1cm)--(300:1cm);
        \draw [thick][dashed](160:1cm)--(300:1cm);
        \draw [thick][dashed](160:1cm)--(200:1cm);
        \draw [thick][dashed](260:1cm)--(300:1cm);
        \draw [thick][dashed](260:1cm)--(200:1cm);
        \node[right][below] at (-20:1cm) {$i$};
        \node[above] at (80:1cm) {$j$};
        \node[right] at (20:1cm) {$k$};
        \node[left][above] at (120:1cm) {$\ell$};
        \node[right][below] at (300:1cm) {$k+N$};
        \node[left] at (200:1cm) {$\ell+N$};
        \node[left] at (160:1cm) {$j+N$};
        \node[below] at (260:1cm) {$i+N$};
        }
\end{scope}
\end{tikzpicture}
\begin{tikzpicture}[scale=2][auto]
\begin{scope}
    \foreach \x in {-26,0,26,52,78,104,130,155,180,206,232,258,282,308}{
        \draw (\x:1 cm) -- (\x + 26: 1cm) -- cycle;
        \draw [thick](78:1cm)--(308:1cm);
        \draw [thick](26:1cm)--(258:1cm);
        \draw [thick](78:1cm)--(206:1cm);
        \draw [thick](258:1cm)--(130:1cm);
        \draw [thick][dashed](26:1cm)--(308:1cm);
        \draw [thick][dashed](206:1cm)--(130:1cm);
        \draw [thick][dashed](26:1cm)--(78:1cm);
        \draw [thick][dashed](258:1cm)--(308:1cm);
        \draw [thick][dashed](78:1cm)--(130:1cm);
        \draw [thick][dashed](206:1cm)--(258:1cm);
        \draw (78:1) edge[thick, dashed, decorate, decoration=snake, color=red!] (258:1);
\draw (78:1) edge[thick, dashed, color=green!] (258:1);
         \node[right] at (26:1cm) {$k$};
         \node[above] at (78:1cm) {$i$};
         \node[left] at (125:1cm) {$j+N$};
         \node[left] at (206:1cm) {$k+N$};
         \node[below] at (258:1cm) {$\ell=i+N$};
         \node[below] at (308:1cm) {$j$};

}
\end{scope}
\end{tikzpicture}
$$

  \item[{(Pt2)}]
The second Ptolemy condition in type
  $D$:
  $$
\begin{tikzpicture}[scale=2]
\begin{scope}
    \foreach \x in {-26,0,26,52,78,104,130,155,180,206,232,258,282,308}{
        \draw (\x:1 cm) -- (\x + 26: 1cm) -- cycle;
        \draw [thick][dashed](78:1cm)--(308:1cm);
        \draw [thick][dashed](258:1cm)--(130:1cm);
        \draw [thick][dashed](258:1cm)--(308:1cm);
        \draw [thick][dashed](78:1cm)--(130:1cm);
        \draw (78:1) edge[thick, decorate, decoration=snake, color=red!] (258:1);
\draw (308:1) edge[thick, color=green!] (130:1);
         \node[above] at (78:1cm) {$i$};
         \node[left] at (125:1cm) {$k+N$};
         \node[below] at (258:1cm) {$i+N$};
         \node[below] at (308:1cm) {$k$};
}
\end{scope}
\end{tikzpicture}
$$
\item[{(Pt3)}]
The third Ptolemy condition in type
  $D$:
\[
\begin{tikzpicture}[scale=2]
\begin{scope}
    \foreach \x in {-20,0,20,40,60,80,100,120,140,160,180,200,220,240,260,280,300,320}{
        \draw (\x:1 cm) -- (\x + 20: 1cm) -- cycle;
        \draw [thick](-20:1cm)--(80:1cm);
        \draw [thick](160:1cm)--(260:1cm);
        \draw [thick, color=green!](40:1cm)--(220:1cm);
        \draw [thick,dashed, color=green!](80:1cm)--(260:1cm);
        \draw [thick,dashed, color=green!](-20:1cm)--(160:1cm);
        \draw [thick][dashed](40:1cm)--(-20:1cm);
        \draw [thick][dashed](80:1cm)--(40:1cm);
        \draw [thick][dashed](160:1cm)--(220:1cm);
        \draw [thick][dashed](260:1cm)--(220:1cm);
        \node[right] at (-20:1cm) {$k$};
        \node[above] at (40:1cm) {$i$};;
        \node[left] at (220:1cm) {$i+N$};
        \node[above] at (80:1cm) {$\ell$};
        \node[left] at (160:1cm) {$k+N$};
        \node[below] at (260:1cm) {$\ell+N$};
        }
\end{scope}
\end{tikzpicture}
\]
\end{enumerate}
\caption{The Ptolemy conditions in type~$D$.}
\label{10}
\end{figure}

\begin{remark}
when $m=1$, Definition \ref{c2} coincides with Definition 2.1 in \cite{HJR3}. When $m>1$, We have the following results by Lemma \ref{B}:
\begin{itemize}
  \item [(1)] In Pt1, if none of the vertices $i,j,k,\ell$ are
    opposite vertices, then there are at most two of
  $(i,k)$, $(i,\ell)$, $(j,k)$, $(j,\ell)$  are
    $m$-arcs of $P$.
  \item [(2)] In Pt2, we have $\overline{(i,k)}=\overline{(i+N,k+N)}$, $\overline{(i,k+N)}=\overline{(i+N,k)}$. Moreover, if $(i,k)$ is an $m$-arc, then $(i,k+N)$ is not.
  \item [(3)] In Pt3, at most one of $(i,k),(i,\ell)$  is an $m$-arc.
\end{itemize}
\end{remark}

\begin{example}
Suppose $m=3, n=4$. Let $X$ be a Ptolemy diagram of type $D$ in $P$. (See Fig. \ref{20}).
\begin{itemize}
  \item [(1)] If $(1,5)$ and $(4,8)$ are two crossing non-diameter $3$-arcs in $X$, only $(1,8)$ is a $3$-arc in $X$, but $(1,4),(4,5)$ and $(5,8)$ are not $3$-arcs. Moreover, if $(1,8)$ and $(4,11)$ are two crossing non-diameter $3$-arcs in $X$ with $1$ and $11$ are opposite points, we have $(4,8)$ (non-diameter) and $(1,11)_{r,g}$ are (diameter) $3$-arcs in $X$, but $(1,4),(8,11)$ are not  $3$-arcs.
  \item [(2)] If $(1,11)_r$ and $(4,14)_g$ are two crossing diameter $3$-arcs in $X$, then$\overline{(1,14)}=\overline{(4,11)}$ is a $3$-arc in $X$, but $\overline{(1,4)}=\overline{(11,14)}$ is not a $3$-arc.
  \item [(3)] If $(1,5)$ and $(4,14)_r$ are two crossing $3$-arcs in $X$, then $(1,11)_r$ and $(5,15)_r$ are in $X$, but neither $(1,4)$ nor $(4,5)$ is a $3$-arc.
\end{itemize}

\begin{figure}
\begin{center}
\begin{tikzpicture}[scale=2]
\begin{scope}
    \foreach \x in {-18,0,18,36,54,72,90,108,126,144,162,180,198,216,234,252,270,288,306,324}{
        \draw (\x:1 cm) -- (\x + 18: 1cm) -- cycle;
        \draw [thick](-18:1cm)--(54:1cm);
        \draw [thick](0:1cm)--(288:1cm);
         \draw [thick](162:1cm)--(234:1cm);
        \draw [thick](180:1cm)--(108:1cm);
        \draw [thick][dashed](54:1cm)--(288:1cm);
        \draw [thick][dashed](234:1cm)--(108:1cm);
         \node[right] at (-18:1cm) {5};
         \node[right] at (0:1cm) {4};
         \node[right] at (18:1cm) {3};
         \node[right][above] at (36:1cm) {2};
         \node[above] at (54:1cm) {1};
         \node[above] at (72:1cm) {20};
         \node[above] at (90:1cm) {19};
         \node[left][above] at (108:1cm) {18};
         \node[left][above] at (126:1cm) {17};
         \node[left][above]  at (144:1cm) {16};
         \node[left] at (162:1cm) {15};
         \node[left] at (180:1cm) {14};
         \node[left] at (198:1cm) {13};
         \node[left] at (216:1cm) {12};
         \node[left] at (234:1cm) {11};
         \node[below] at (252:1cm) {10};
         \node[below] at (270:1cm) {9};
         \node[below] at (288:1cm) {8};
         \node[right][below] at (306:1cm) {7};
         \node[right] at (324:1cm) {6};

}
\end{scope}
\end{tikzpicture}
\begin{tikzpicture}[scale=2][auto]
\begin{scope}
    \foreach \x in {-18,0,18,36,54,72,90,108,126,144,162,180,198,216,234,252,270,288,306,324}{
        \draw (\x:1 cm) -- (\x + 18: 1cm) -- cycle;
        \draw [thick](234:1cm)--(0:1cm);
        \draw [thick](54:1cm)--(288:1cm);
         \draw [thick][dashed](0:1cm)--(288:1cm);
         \draw [thick](54:1cm)--(180:1cm);
        \draw [thick](234:1cm)--(108:1cm);
         \draw [thick][dashed](180:1cm)--(108:1cm);
        \draw (54:1) edge[thick, dashed, decorate, decoration=snake, color=red!] (234:1);
        \draw (54:1) edge[thick, dashed, color=green!] (234:1);
         \node[right] at (-18:1cm) {5};
         \node[right] at (0:1cm) {4};
         \node[right] at (18:1cm) {3};
         \node[right][above] at (36:1cm) {2};
         \node[above] at (54:1cm) {1};
         \node[above] at (72:1cm) {20};
         \node[above] at (90:1cm) {19};
         \node[left][above] at (108:1cm) {18};
         \node[left][above] at (126:1cm) {17};
         \node[left][above]  at (144:1cm) {16};
         \node[left] at (162:1cm) {15};
         \node[left] at (180:1cm) {14};
         \node[left] at (198:1cm) {13};
         \node[left] at (216:1cm) {12};
         \node[left] at (234:1cm) {11};
         \node[below] at (252:1cm) {10};
         \node[below] at (270:1cm) {9};
         \node[below] at (288:1cm) {8};
         \node[right][below] at (306:1cm) {7};
         \node[right] at (324:1cm) {6};

}
\end{scope}
\end{tikzpicture}
\begin{tikzpicture}[scale=2]
\begin{scope}
    \foreach \x in {-18,0,18,36,54,72,90,108,126,144,162,180,198,216,234,252,270,288,306,324}{
        \draw (\x:1 cm) -- (\x + 18: 1cm) -- cycle;
        \draw [thick][dashed](54:1cm)--(180:1cm);
        \draw [thick][dashed](234:1cm)--(0:1cm);
       \draw (234:1) edge[thick, decorate, decoration=snake, color=red!] (54:1);
       \draw (180:1) edge[thick, color=green!] (0:1);
         \node[right] at (-18:1cm) {5};
         \node[right] at (0:1cm) {4};
         \node[right] at (18:1cm) {3};
         \node[right][above] at (36:1cm) {2};
         \node[above] at (54:1cm) {1};
         \node[above] at (72:1cm) {20};
         \node[above] at (90:1cm) {19};
         \node[left][above] at (108:1cm) {18};
         \node[left][above] at (126:1cm) {17};
         \node[left][above]  at (144:1cm) {16};
         \node[left] at (162:1cm) {15};
         \node[left] at (180:1cm) {14};
         \node[left] at (198:1cm) {13};
         \node[left] at (216:1cm) {12};
         \node[left] at (234:1cm) {11};
         \node[below] at (252:1cm) {10};
         \node[below] at (270:1cm) {9};
         \node[below] at (288:1cm) {8};
         \node[right][below] at (306:1cm) {7};
         \node[right] at (324:1cm) {6};

}
\end{scope}
\end{tikzpicture}
\begin{tikzpicture}[scale=2][auto]
\begin{scope}
    \foreach \x in {-18,0,18,36,54,72,90,108,126,144,162,180,198,216,234,252,270,288,306,324}{
        \draw (\x:1 cm) -- (\x + 18: 1cm) -- cycle;
        \draw [thick](-18:1cm)--(54:1cm);
        \draw [thick](162:1cm)--(234:1cm);
       \draw (0:1) edge[thick, decorate, decoration=snake, color=red!] (180:1);
       \draw (-18:1) edge[thick, dashed, decorate, decoration=snake, color=red!] (162:1);
       \draw (54:1) edge[thick, dashed, decorate, decoration=snake, color=red!] (234:1);
         \node[right] at (-18:1cm) {5};
         \node[right] at (0:1cm) {4};
         \node[right] at (18:1cm) {3};
         \node[right][above] at (36:1cm) {2};
         \node[above] at (54:1cm) {1};
         \node[above] at (72:1cm) {20};
         \node[above] at (90:1cm) {19};
         \node[left][above] at (108:1cm) {18};
         \node[left][above] at (126:1cm) {17};
         \node[left][above]  at (144:1cm) {16};
         \node[left] at (162:1cm) {15};
         \node[left] at (180:1cm) {14};
         \node[left] at (198:1cm) {13};
         \node[left] at (216:1cm) {12};
         \node[left] at (234:1cm) {11};
         \node[below] at (252:1cm) {10};
         \node[below] at (270:1cm) {9};
         \node[below] at (288:1cm) {8};
         \node[right][below] at (306:1cm) {7};
         \node[right] at (324:1cm) {6};

}
\end{scope}
\end{tikzpicture}
\caption{Ptolemy diagrams of type $D$ in a regular $20$-gon}
\label{20}
\end{center}
\end{figure}
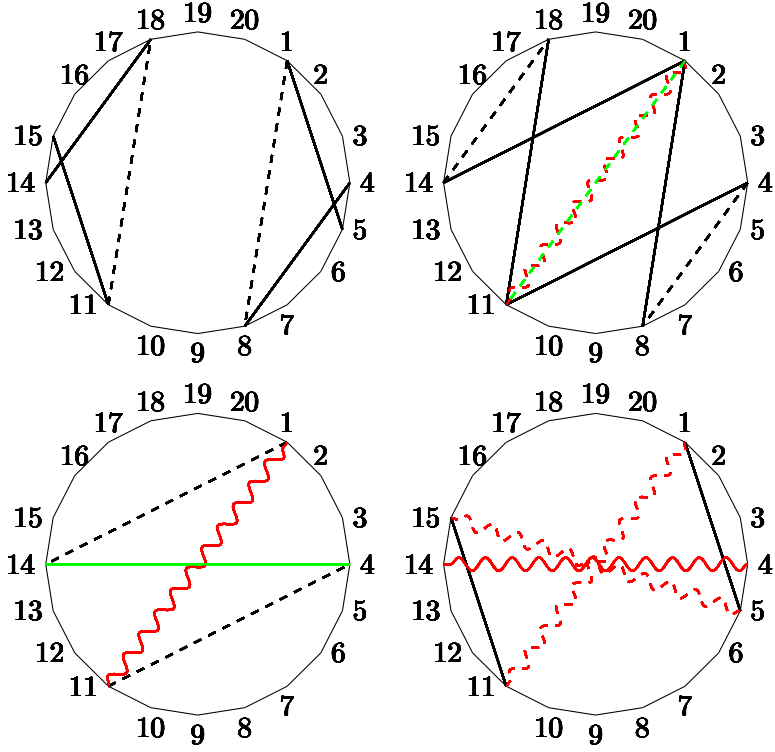

\end{example}

Let $\mathfrak{U}$ be a set of $m$-arcs in $P$. We define $\mathfrak{U}^\bot$ to be the set consisting of two kinds of $m$-arcs:
\begin{itemize}
  \item [(1)] $u\in P$ such that $u$ does not cross any $m$-arcs in $\mathfrak{U}$, or
  \item [(2)] $u\in P$ such that any $m$-arc $v\in\mathfrak{U}$ crossing $u$ satisfies  $d(v,u)>1$.
\end{itemize}
Similarly, we define $^\bot\mathfrak{U}$ to be the set consisting of two kinds of $m$-arcs:
\begin{itemize}
  \item [(1)] $u\in P$ such that $u$ does not cross any $m$-arcs in $\mathfrak{U}$, or
  \item [(2)] $u\in P$ such that any $m$-arc $v\in\mathfrak{U}$ crossing $u$ satisfies  $d(u,v)>1$.
\end{itemize}

Note that for an arbitrary $m$-arc $(a,b)$ in $P$, since $\overline{(a,b)}=\overline{(a+N,b+N)}$, we can always choose a representative $(a,b)\in\overline{(a,b)}$ such that $a<b$ and $b-a\leq N$.
\begin{lemma}\label{lemma3}
Let $\mathfrak{U}$ be a set of $m$-arcs in $\Pi$. Then $\mathfrak{U}^\bot$ and $^\bot\mathfrak{U}$ are Ptolemy diagrams of type $D$.
\end{lemma}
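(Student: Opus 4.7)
My plan is to prove the statement for $\mathfrak{U}^\perp$ and then observe that the argument for ${}^\perp\mathfrak{U}$ is entirely analogous upon exchanging the roles of the two arguments of $d(\cdot,\cdot)$. Because every element of $\mathfrak{U}^\perp$ is, by convention, a paired arc $\overline{(i,j)}$, which is already invariant under rotation by $180^\circ$, the required rotational symmetry of $\mathfrak{U}^\perp$ is automatic, so the entire content of the lemma is to verify the three Ptolemy conditions (Pt1), (Pt2), (Pt3).

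The key step I would isolate is a \emph{propagation lemma}: if $\alpha,\beta$ are two crossing $m$-arcs and $\gamma$ is any $m$-arc produced by one of (Pt1), (Pt2), (Pt3), then every $m$-arc $w$ which crosses $\gamma$ with $d(w,\gamma)=1$ must also cross $\alpha$ with $d(w,\alpha)=1$ or cross $\beta$ with $d(w,\beta)=1$. Granting this, the Ptolemy conditions follow by contradiction: if $\alpha,\beta\in\mathfrak{U}^\perp$ cross and some Ptolemy arc $\gamma$ is absent from $\mathfrak{U}^\perp$, then there exists a witness $w\in\mathfrak{U}$ with $d(w,\gamma)=1$, and the propagation lemma immediately produces $d(w,\alpha)=1$ or $d(w,\beta)=1$, contradicting $\alpha,\beta\in\mathfrak{U}^\perp$.

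To establish the propagation lemma in case (Pt1) I would write $\alpha=(i,j)$, $\beta=(k,\ell)$ in cyclic order $i,k,j,\ell$ and treat the side $\gamma=(i,k)$ (the other three sides are symmetric). Being $m$-arcs forces $j-i\equiv k-i\equiv 1\pmod m$, and therefore the identity $j-k\equiv 0\pmod m$ that drives the computation. I would then case-split on the cyclic position of the endpoints $p,q$ of $w$: without loss of generality $p\in(i,k)$, and $q$ lies in one of $(k,j),\{j\},(j,\ell),\{\ell\},(\ell,i)$. In each subcase a direct inspection shows which of $\alpha,\beta$ the arc $w$ crosses, and Definition \ref{def:diatance} reduces the relevant distance to $(k-p)\bmod m$ or $(j-p)\bmod m$; the hypothesis $d(w,\gamma)=1$, i.e.\ $k-p\equiv 1\pmod m$, together with $j-k\equiv 0\pmod m$, collapses both expressions to $1$.

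For (Pt2) and (Pt3) the approach is the same, but one must track two additional pieces of information: (i)~a diameter crosses a non-diameter precisely when the non-diameter has endpoints strictly on opposite sides of the diameter, and (ii)~for odd $m$ the translate $\tau_m$ flips the colour of a diameter, so a distance-$1$ witness $d(w,\alpha)=1$ with $\alpha$ a diameter of a given colour forces $w$ to be of the opposite colour. The hardest step I anticipate is the bookkeeping in (Pt3): the clause ``does not cross $(k+N,\ell+N)$'' is precisely what rules out the bad configurations in which a would-be Ptolemy arc could be witnessed by some $w$ that crosses neither $\alpha$ nor $\beta$ in the required way; most of the work here will be verifying that this partner-arc exclusion matches the crossing analysis exactly, and that the ``same colour'' clause on the new diameters $(k,k+N)$ and $(\ell,\ell+N)$ is forced by the colour-flipping rule of $\tau_m$.
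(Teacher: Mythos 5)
Your proposal is correct and takes essentially the same approach as the paper: both reduce the verification to a witness $w\in\mathfrak{U}$ crossing the would-be Ptolemy arc at distance~1, and both hinge on the same propagation claim that such a $w$ must then cross $\alpha$ or $\beta$ at distance~1, contradicting $\alpha,\beta\in\mathfrak{U}^\perp$. The paper states this inline rather than isolating it as a separate lemma, and dispatches the diameter cases (Pt2), (Pt3) by the remark that they are similar, just as you do.
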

\begin{proof}
We only need to show $\mathfrak{U}^\bot$ is a Ptolemy diagram of type $D$, since $^\bot\mathfrak{U}$ is a Ptolemy diagram of type $D$ can be proved similarly. Suppose $u=(i,j)$ and $v=(k,\ell)$ are two crossing $m$-arcs in $\mathfrak{U}^\bot$, and suppose $(i,k)$ is an $m$-arc, we need to show $(i,k)\in\mathfrak{U}^\bot$.

Suppose $u=(i,j)$ and $v=(k,\ell)$ are non-diameter $m$-arcs. If $(i,k)\not\in\mathfrak{U}^\bot$, then there exists an $m$-arc $w\in\mathfrak{U}$ such that $w$ crosses $(i,k)$ and $d(w,(i,k))=1$. Suppose $w=(a,b)$. Then $d(w,(i,k))=1$ implies one can pivot the node $a$ until it meets $k$ or $i$ in a number $x=1$(mod $m$) of steps. Note that every $m$-arc $w\in\mathfrak{U}$  crossing $(i,k)$ crosses either $u$ or $v$, then $d(w,(i,k))=1$ implies that $w$ crosses $u$ and $d(w,u)=1$, or $w$ crosses $v$ and $d(w,v)=1$, a contradiction with $u,v\in\mathfrak{U}^\bot$. The case when $u=(i,j)$ or $v=(k,\ell)$ is a diameter, or both are diameters can be proved similarly.
\end{proof}

\subsection{Classification of cotorsion pairs in $\C_{D_{n}}^{m}$ }

We give a geometric realization of torsion pairs in $\C_{D_{n}}^{m}$ when $m$ is odd in this subsection.

For any vertex $a$ in $P$, set
$$(a,-)=\{(a,b)|(a,b)\text{\;is\;an\;$m$-arc\;in\;}P\}.$$
Specifically, $(a,-)$ is the set of $m$-arc in $P$ with $a$ as a node. In order to get the main result of this paper, we need the following lemmas and proposition.

\begin{lemma}\label{lemma1}
Suppose $\mathfrak{U}$ is a set of $m$-arcs in $P$ and $(a,c)$ is an $m$-arc in $^\bot(\mathfrak{U}^\bot)$. Then any $m$-arc $v$  crossing $(a,c)$ and satisfying $d((a,c),v)=1$ is not contained in $\mathfrak{U}^\bot$. Especially, any $m$-arc $v$ in $(a+1,-), (c+1,-)$ that crosses $(a,c)$  is not contained in $\mathfrak{U}^\bot$.
\end{lemma}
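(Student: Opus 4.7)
The plan is to reduce the entire lemma to a short computation of the distance $d((a,c),v)$. For the first statement I will simply contrapose the definition of $^\bot(\mathfrak{U}^\bot)$: by definition, an $m$-arc $w$ lies in $^\bot(\mathfrak{U}^\bot)$ iff every $v' \in \mathfrak{U}^\bot$ crossing $w$ satisfies $d(w,v') > 1$. Applied to $w = (a,c)$, any $m$-arc $v$ crossing $(a,c)$ with $d((a,c),v) = 1$ is therefore forced to lie outside $\mathfrak{U}^\bot$. This step is purely formal.

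For the ``especially'' clause, by the first part it suffices to show $d((a,c), v) = 1$ for every $m$-arc $v \in (a+1,-) \cup (c+1,-)$ that crosses $u = (a,c)$. I will split into two cases. If $v = (a+1, b)$, the crossing condition places $a+1$ clockwise immediately next to $a$ in the cyclic order of endpoints, so labeling $i = a$, $j = c$, $k = a+1$ in Definition \ref{def:diatance} gives $d(u,v) = (k-i)\bmod m = 1$; no $m$-arc congruence is needed here. If $v = (c+1, b)$, the crossing condition forces $b$ to lie strictly between $a$ and $c$ clockwise, and labeling $i = a$, $j = c$, $k = b$, $\ell = c+1$ yields $d(u,v) = (b-a)\bmod m$. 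Combining the $m$-arc congruences $c - a \equiv 1 \pmod m$ (from $u$) and $c + 1 - b \equiv 1 \pmod m$, i.e.\ $c \equiv b \pmod m$ (from $v$), produces $b - a \equiv 1 \pmod m$ and hence $d(u,v) = 1$. Invoking the first assertion then finishes the proof.

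The main obstacle is not conceptual but bookkeeping: I have to verify the argument uniformly across the various cyclic configurations, in particular when $v$ is a diameter such as $(a+1, a+1+N)_{g,r}$, where Definition \ref{c1}\,(c) provides the crossing criterion and the same adjacency $a \rightsquigarrow a+1$ still produces distance $1$. The identity $N = mn - m + 1 \equiv 1 \pmod m$ is what guarantees that all $m$-arc congruences survive whichever paired representative $\overline{(i,j)} = \overline{(i+N, j+N)}$ is chosen when $b$ is cyclically on the ``far'' side. Once this case check is carried out, the whole proof collapses to the three-line modular calculation above together with the contrapositive in paragraph one.
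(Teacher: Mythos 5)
Your proof is correct and follows essentially the same route as the paper: the first assertion is just the contrapositive of the definition of $^\bot(\mathfrak{U}^\bot)$, and the ``especially'' part reduces to checking $d((a,c),v)=1$ for the relevant $v$. You spell out the distance calculation (including the modular bookkeeping for the $(c+1,-)$ case and the diameter case) in more detail than the paper, which simply asserts $d((a,c),(a+1,b))=1$; incidentally, your $(c+1,-)$ computation could be shortened by relabeling $u$ so that $i=c$, making $d(u,v)=(c+1-c)\bmod m=1$ immediate by symmetry with the $(a+1,-)$ case.
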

\begin{proof}
If there exists an $m$-arc $v\in\mathfrak{U}^\bot$ crossing $(a,c)$ and $d((a,c),v)=1$, then $(a,c)$ is not contained in $^\bot(\mathfrak{U}^\bot)$ by Lemma \ref{lemma0}, a contradiction.

Suppose $(a+1,b)$ is an $m$-arc in $(a+1,-)$ and $(a+1,b)$ crosses $(a,c)$. Since $d((a,c), (a+1,b))=1$, $(a+1,b)$ is not contained in $\mathfrak{U}^\bot$.
\end{proof}
Since $(a,-)$ is a finite set, there exists a maximal integer $b$ and a minimal integer $c$ such that $(a,b)\in(a,-)$ and $(a,c)\in(a,-)$. The idea of ``minimal" and ``maximal" will appear in the following proofs.
\begin{lemma}\label{lemma2}
Suppose $\mathfrak{U}$ is a Ptolemy diagram of type $D$ and $a$ is a vertex in $P$ satisfying $(a,-)\cap\mathfrak{U}\neq\emptyset$. Suppose $(a,a+N)_{r,g}\notin\mathfrak{U}$ and $b(\neq a-m-1)$ is maximal such that $(a,b)\in\mathfrak{U}$. Then $(a-m,b)$ is an $m$-arc in $\mathfrak{U}^\bot$. Especially, if $(a-m,b)$ is a diameter, then $(a-m,b)_{r,g}\in\mathfrak{U}^\bot$.
\end{lemma}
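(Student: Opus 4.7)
The plan is to verify two things: first, that $(a-m,b)$ is an $m$-arc of $P$, and second, that no $v\in\mathfrak{U}$ crosses it with $d(v,(a-m,b))=1$. The core of the argument is to show that any such crossing $v$ would force, via Ptolemy closure applied to $v$ and $(a,b)$, a new arc from $a$ into $\mathfrak{U}$ lying strictly further clockwise from $a$ than $b$, contradicting the maximality of $b$.

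First I would check that $(a-m,b)$ is an $m$-arc: from $(a,b)$ being an $m$-arc we get $b-a\equiv 1\pmod m$, so $b-(a-m)\equiv 1\pmod m$; the non-adjacency conditions $b\neq a-m\pm 1$ follow from $b\neq a-m-1$ (hypothesis) and $b\neq a-m+1$ (automatic for odd $m\geq 3$, and immediate for $m=1$).

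For the main claim I would argue by contradiction, assuming $v\in\mathfrak{U}$ crosses $u=(a-m,b)$ with $d(v,u)=1$ (fixing a color if $u$ is a diameter). The key subclaim is that $v$ must also cross $(a,b)$. If not, both endpoints of $v$ would lie on the same side of $(a,b)$; since the left side $\{a+1,\ldots,b-1\}$ of $(a,b)$ is contained in the corresponding side $\{a-m+1,\ldots,b-1\}$ of $(a-m,b)$, $v$ could not cross $u$ from there, so both endpoints of $v$ would have to lie in the right side $\{b+1,\ldots,a-1\}$, with one in the ``new region'' $\{a-m+1,\ldots,a-1\}$ and the other in $\{b+1,\ldots,a-m-1\}$. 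The resulting cyclic order $(a-m,k,b,\ell)$ would then yield, via Definition \ref{def:diatance}, $d(u,v)\in\{1,\ldots,m-1\}$ and hence $d(v,u)=m+1-d(u,v)\in\{2,\ldots,m\}$, contradicting $d(v,u)=1$; the same computation covers the case of $v$ a diameter.

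So $v$ crosses $(a,b)$. If $v$ is a diameter, condition (Pt3) applied to $v$ and $(a,b)\in\mathfrak{U}$ places the diameter $(a,a+N)$ of the same color as $v$ into $\mathfrak{U}$, contradicting $(a,a+N)_{r,g}\notin\mathfrak{U}$. Otherwise $v=(k,\ell)$ is non-diameter; the cyclic order $(a-m,k,b,\ell)$ together with $d(v,u)=1$ forces $(k-a)\equiv 0\pmod m$, whence the clockwise distance satisfies $d_c(a,\ell)\equiv d_c(a,k)+d_c(k,\ell)\equiv 0+1\equiv 1\pmod m$, so $(a,\ell)$ is an $m$-arc. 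Condition (Pt1) then places $(a,\ell)\in\mathfrak{U}$: if $\ell=a+N$, the ``opposite vertex'' clause of (Pt1) deposits both $(a,a+N)_{r,g}$ into $\mathfrak{U}$, contradicting the hypothesis; otherwise $(a,\ell)$ is non-diameter with $d_c(a,\ell)>d_c(a,b)$, and the only possibility compatible with the maximality of $b$ would be $\ell=a-m-1$, which the strict reading of $b\neq a-m-1$ rules out. The diameter case for $u$ itself ($b=a-m+N$) runs on the same template, using that same-color diameters do not cross by Definition \ref{c1}(c). The main obstacle is excluding $\ell=a-m-1$: the modular congruence alone does not rule it out, since $d_c(a,a-m-1)=2N-m-1\equiv 1\pmod m$, and this is precisely where the exclusion $b\neq a-m-1$ does essential work.
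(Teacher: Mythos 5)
Your overall strategy matches the paper's: assume some $v\in\mathfrak{U}$ crosses $(a-m,b)$ at distance $1$ and use the Ptolemy conditions applied to $v$ and $(a,b)$ to manufacture either an arc $(a,\ell)\in\mathfrak{U}$ with $\ell$ strictly further clockwise than $b$ (contradicting maximality) or a diameter $(a,a+N)\in\mathfrak{U}$ (contradicting the hypothesis). The paper simply asserts the cyclic order $a-m<a<s<b<t$ and then invokes Pt1 or Pt3; you instead isolate the intermediate subclaim ``$v$ crosses $(a,b)$'' and justify it, which is a reasonable way to present the same idea. However, the subclaim as stated is not quite true, and your justification of it has a gap: you argue that if $v$ does not cross $(a,b)$ then both endpoints of $v$ lie on the same side of $(a,b)$, but this omits the case where $v$ shares the endpoint $a$ with $(a,b)$. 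In that case $v=(a,\ell)$ with $a-m<a<b<\ell$, one checks that indeed $d(v,(a-m,b))=(b-a)\bmod m=1$, so this is a live possibility consistent with all your standing assumptions; the subclaim fails, but the configuration still yields an immediate contradiction because $(a,\ell)\in\mathfrak{U}$ with $\ell$ beyond $b$ violates maximality. In other words, what your dichotomy actually shows in the ``both endpoints on the right'' branch is that $k-(a-m)\in\{1,\ldots,m\}$, not $\{1,\ldots,m-1\}$ as you write; the omitted value $k=a$ (giving $d(u,v)=m$, $d(v,u)=1$) is exactly the shared-endpoint case. One sentence handling it closes the gap. Finally, your closing remarks about $\ell=a-m-1$ are off target: once $(a,\ell)\in\mathfrak{U}$ with $\ell$ further clockwise than $b$, the maximality of $b$ is contradicted regardless of whether $\ell$ equals $a-m-1$; the hypothesis $b\neq a-m-1$ is used only (and earlier) to guarantee that $(a-m,b)$ is a genuine arc of $P$, not to exclude particular values of $\ell$ in the contradiction step.
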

\begin{proof}
The fact $b\neq a-m-1$  implies $(a-m,b)$ is an $m$-arc. If $(a-m,b)\not\in\mathfrak{U}^\bot$, then there exists an $m$-arc (maybe a diameter) $u\in\mathfrak{U}$ crossing $(a-m,b)$ and $d(u,(a-m,b))=1$. Suppose $(a-m,b)$ is non-diameter. We have the following two cases:
\begin{itemize}
  \item [(1)] The $m$-arc $u=(s,t)$ is non-diameter. The fact $u=(s,t)$  crosses $(a-m,b)$ and $d(u,(a-m,b))=1$ implies $a-m<a<s<b<t$. Then we have $(a,b)$ crosses $u=(s,t)$ in $\mathfrak{U}$. Since  $\mathfrak{U}$ is a Ptolemy diagram of type $D$ and $d((s,t),(a-m,b))=1$ , $(a,t)\in\mathfrak{U}$ is an $m$-arc by Pt1. The fact $(a,t)\in\mathfrak{U}$ with $b<t$ is in contradiction with the maximality of $b$.
  \item [(2)] The $m$-arc $u=(s,s+N)$ is a diameter. Then $u=(s,s+N)$ crosses $(a-m,b)$ and $d((s,s+N),(a-m,b))=1$ implies $a-m<a<s<b$. Then we have $(a,b)$ crosses the diameter $u=(s,s+N)$ in $\mathfrak{U}$. Since  $\mathfrak{U}$ is a Ptolemy diagram of type $D$ and $d((s,s+N),(a-m,b))=1$ , $(a,a+N)$ with the same color as $u=(s,s+N)$ is in $\mathfrak{U}$ by Pt3. This is a contradiction with the fact $(a,a+N)_{r,g}\notin\mathfrak{U}$.
\end{itemize}
Then we prove that $(a-m,b)$ is an $m$-arc in $\mathfrak{U}^\bot$. If $(a-m,b)$ is a diameter, then $(a-m,b)_{r,g}\in\mathfrak{U}^\bot$ can be proved similarly.
\end{proof}
We give a "minimal" version of Lemma \ref{lemma2} without proof.
\begin{lemma}\label{lemma4}
Suppose $\mathfrak{U}$ is a Ptolemy diagram of type $D$ and $a$ is a vertex in $P$ satisfying $(a,-)\cap\mathfrak{U}\neq\emptyset$. Suppose  $(a,a+N)_{r,g}\notin\mathfrak{U}$ and $b(\neq m+a+1)$ is minimal such that $(a,b)\in\mathfrak{U}$. Then $(a+1,b-m+1)$ is an $m$-arc in $\mathfrak{U}^\bot$. Especially, if $(a+1,b-m+1)$ is a diameter, then $(a+1,b-m+1)_{r,g}\in\mathfrak{U}^\bot$.
\end{lemma}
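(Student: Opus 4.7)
The plan is to prove this by contradiction, mirroring Lemma \ref{lemma2}. The setup is dual: Lemma \ref{lemma2} enlarges the maximal arc $(a,b)$ to $(a-m,b)$, whereas here we contract the minimal arc $(a,b)$ on both sides to $(a+1,b-m+1)$. First, since $(a,b)$ is an $m$-arc we have $b-a\equiv 1\pmod{m}$, so $(b-m+1)-(a+1)=b-a-m\equiv 1\pmod{m}$; combined with the hypothesis $b\neq a+m+1$ (which ensures $b-m+1\neq a+2$) this confirms that $(a+1,b-m+1)$ is an $m$-arc of $P$.

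Assume for contradiction $(a+1,b-m+1)\notin\mathfrak{U}^\bot$. Then there exists $u\in\mathfrak{U}$ crossing $(a+1,b-m+1)$ with $d(u,(a+1,b-m+1))=1$. Suppose first that $(a+1,b-m+1)$ is non-diameter, and split on the type of $u$. When $u=(s,t)$ is non-diameter, the crossing condition together with $d=1$ pin down the linear order $a+1<s<b-m+1<t$ with $s\equiv a+1\pmod{m}$; since $(s,t)$ is an $m$-arc and $b-a\equiv 1\pmod{m}$, residue bookkeeping forces $t\geq b+1$, so $u$ also crosses the larger arc $(a,b)$. Applying Pt1 to $\{(a,b),u\}\subset\mathfrak{U}$ yields $(a,s)\in\mathfrak{U}$, an $m$-arc by Lemma \ref{B} since $s-a\equiv 1\pmod{m}$ and $s>a+1$; but $a<s<b$ contradicts the minimality of $b$. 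When $u=(s,s+N)$ is a diameter, the same distance computation still gives $s\equiv a+1\pmod{m}$ with $a+1<s<b-m+1$, so $s$ lies strictly inside $(a,b)$ while $s+N$ lies strictly outside (taking the standard representative with $b-a\leq N$); hence $u$ crosses $(a,b)$, and Pt3 applied to $u$ and $(a,b)$ forces the diameter $(a,a+N)$ of the same colour as $u$ into $\mathfrak{U}$, contradicting $(a,a+N)_{r,g}\notin\mathfrak{U}$.

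The case in which $(a+1,b-m+1)$ is itself a diameter (i.e.\ $b=a+m+N$) proceeds analogously: a non-diameter $u$ is handled exactly as above via Pt1 (the same residue argument still forces $t>b$), while a diameter $u$ of the opposite colour crosses $(a,b)$ as well and Pt3 again forces the forbidden diameter $(a,a+N)$ into $\mathfrak{U}$.

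The main technical obstacle is the residue bookkeeping around the distance function: one must verify that $d(u,(a+1,b-m+1))=1$ together with $u$ being an $m$-arc crossing $(a+1,b-m+1)$ forces the far endpoint of $u$ to lie strictly beyond $b$, so that $u$ genuinely crosses the larger arc $(a,b)\in\mathfrak{U}$ and the intended Ptolemy clause applies. Once this is pinned down, the case distinctions and applications of Pt1 and Pt3 are structurally identical to those in the proof of Lemma \ref{lemma2}.
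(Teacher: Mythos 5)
Your proof follows the same template as the paper's proof of Lemma~\ref{lemma2}, which is exactly what the paper intends since it states Lemma~\ref{lemma4} ``without proof'' as the minimal counterpart of Lemma~\ref{lemma2}: you verify $(a+1,b-m+1)$ is an $m$-arc, assume a witness $u\in\mathfrak{U}$ with $d(u,(a+1,b-m+1))=1$, split on whether $u$ is a diameter, and invoke Pt1 (resp.\ Pt3) to contradict the minimality of $b$ (resp.\ the hypothesis $(a,a+N)_{r,g}\notin\mathfrak{U}$). The residue bookkeeping showing $t\geq b+1$ in the non-diameter case is correct and is the right replacement for the maximality argument in Lemma~\ref{lemma2}.

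There is, however, a gap in the claim that crossing plus $d=1$ ``pin down the linear order $a+1<s<b-m+1<t$.'' That is only one of two possible configurations. The other is that $a+1$, rather than $b-m+1$, is the endpoint of $(a+1,b-m+1)$ lying strictly between $s$ and $t$, which yields $s\equiv a\pmod m$ (so $s\leq a$) and $t\equiv a+1\pmod m$ with $a+1<t<b-m+1$. In that configuration, if $s=a$ then $u=(a,t)\in\mathfrak{U}$ with $t<b$ contradicts minimality directly; if $s<a$ then $u$ crosses $(a,b)$ and Pt1 now forces the $m$-arc $(a,t)$ (not $(a,s)$) into $\mathfrak{U}$, again with $t<b$. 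The analogous second configuration also arises when $u$ is a diameter and needs the same treatment. So the conclusion survives, but your write-up silently discards a case that does occur. It is only fair to note that the paper's written proof of Lemma~\ref{lemma2} makes the same unqualified assertion (``implies $a-m<a<s<b<t$''), so your argument is at the same level of rigor as the model it mirrors; nevertheless, a complete proof should address both crossing configurations.
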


\begin{lemma}\label{lemma5}
Suppose $\mathfrak{U}$ is a Ptolemy diagram of type $D$ and $a$ is a vertex in $P$ satisfying $(a,-)\cap\mathfrak{U}\neq\emptyset$. Suppose $a+N$ is  minimal such that $(a,a+N)_r\in\mathfrak{U}$ (resp. $(a,a+N)_g\in\mathfrak{U}$), but $(a,a+N)_g\notin\mathfrak{U}$ (resp. $(a,a+N)_r\notin\mathfrak{U}$). Then $(a+1,a+N+1)_{r}\in\mathfrak{U}^\bot$ (resp. $(a+1,a+N+1)_{g}\in\mathfrak{U}^\bot$).
\end{lemma}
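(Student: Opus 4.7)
The plan is to argue by contradiction. Suppose $(a+1, a+N+1)_r \notin \mathfrak{U}^\bot$; then by the definition of $\mathfrak{U}^\bot$ there exists $v \in \mathfrak{U}$ crossing $(a+1, a+N+1)_r$ with $d(v, (a+1, a+N+1)_r) = 1$. Since red diameters do not mutually cross by Definition \ref{c1}\,(c), $v$ is either a green diameter or a non-diameter $m$-arc. In each case I will apply Pt2 or Pt3 from Definition \ref{c2} to the pair $\{(a, a+N)_r, v\}$ either to produce an $m$-arc $(a, b) \in \mathfrak{U}$ with $b - a < N$, contradicting the minimality of $a+N$, or to force $v = (a, a+N)_g$, contradicting $(a, a+N)_g \notin \mathfrak{U}$.

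First, suppose $v = (p, p+N)_g$ is a green diameter. After replacing $p$ by $p+N$ if necessary, we may take $p \in \{a+2, \ldots, a+N\}$. Using $N = (n-1)m + 1 \equiv 1 \pmod m$, the distance condition translates into $p - a \equiv 1 \pmod m$, so $(a, p)$ is an $m$-arc. The red and green diameters $(a, a+N)_r$ and $v$ cross by Definition \ref{c1}\,(c), so Pt2 yields $\overline{(a, p)} \in \mathfrak{U}$. If $p = a + N$ this means $(a, a+N)_g \in \mathfrak{U}$, contradicting the hypothesis; otherwise $p < a + N$, and $(a, p) \in \mathfrak{U}$ with $p - a < N$ contradicts the minimality of $a+N$.

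Next, suppose $v = (s, t)$ is a non-diameter $m$-arc, with $s$ lying on the open arc from $a+1$ to $a+N+1$ clockwise and $t$ on the opposite open arc. The distance condition forces $s - a \equiv 1 \pmod m$ and $t - a \equiv 2 \pmod m$. In the boundary subcases $s = a+N$ or $t = a$ (where an endpoint of $v$ coincides with an endpoint of $(a, a+N)_r$), the rotation-invariance of $\mathfrak{U}$ together with the identification $\overline{(a+N,t)} = \overline{(a, t-N)}$ yields some $(a, b) \in \mathfrak{U}$ with $b - a < N$, a direct contradiction. Otherwise $(a, a+N)_r$ crosses $v$ genuinely and Pt3 applies. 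A mod-$m$ check identifies $(a, s)$ and $(a+N, t)$ as the only $m$-arcs among the four Pt3 candidates $(a, s), (a, t), (a+N, s), (a+N, t)$. A case-analysis on the cyclic position of $t - N$ relative to $s$ (the degenerate case $t - N = s$ being excluded since $v$ is not a diameter) shows that in each subcase exactly one of $(a, s)$ or $(a+N, t)$ fails to cross the partner arc $(s+N, t+N)$, and hence lies in $\mathfrak{U}$ by Pt3. Either way, some $(a, b) \in \mathfrak{U}$ with $b - a < N$ emerges, contradicting minimality.

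The principal technical subtlety will be the case analysis in the non-diameter case: one must check which of $(a, s)$ or $(a+N, t)$ avoids crossing the partner arc $(s+N, t+N)$ as $v$ varies, a verification that hinges on the relative cyclic position of $s$ and $t - N$ on $P$ together with the identification of the correct $m$-arcs via $N \equiv 1 \pmod m$. The \emph{resp.} statement is entirely symmetric: one exchanges the colours red and green throughout and uses $(a, a+N)_g$ in place of $(a, a+N)_r$.
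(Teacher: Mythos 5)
Your proof is correct and follows essentially the same route as the paper: assume for contradiction that $(a+1,a+N+1)_r\notin\mathfrak{U}^\bot$, split on whether the offending crossing $m$-arc in $\mathfrak{U}$ is a diameter or not, and apply Pt2 (resp.\ Pt3) to produce an $m$-arc $(a,b)\in\mathfrak{U}$ with $b<a+N$, contradicting minimality (or forcing $(a,a+N)_g\in\mathfrak{U}$). In fact your treatment of the non-diameter case is more careful than the paper's: you explicitly check which of the two $m$-arc candidates avoids crossing the partner arc $(s+N,t+N)$ (the proviso built into Pt3) and you handle the boundary subcases $s=a+N$ and $t=a$ where $v$ shares a vertex with $(a,a+N)_r$ so that Pt3 does not literally apply, both of which the paper glosses over by simply asserting $(a,t)\in\mathfrak{U}$.
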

\begin{proof}
If $(a+1,a+N+1)_{r}\notin\mathfrak{U}^\bot$, then there exists an $m$-arc $u\in\mathfrak{U}$ crossing $(a+1,a+N+1)_r$ and $d(u,(a+1,a+N+1)_r)=1$. There are the following two cases.
\begin{itemize}
  \item [(1)] If $u$ is a diameter, then $u=(s,s+N)_g$ with $a+1<s<a+N$ since $(a,a+N)_g\notin\mathfrak{U}$. This implies $(s,s+N)_g$ crosses $(a,a+N)_r$, and $d(u,(a+1,a+N+1)_r)=1$ implies $(a,s)$ is an $m$-arc. Since $\mathfrak{U}$ is a Ptolemy diagram of type $D$, we have $(a,s)\in\mathfrak{U}$ by Pt2. This is a contradiction with the minimality of $a+N$.
  \item [(2)] If $u=(s,t)$ is a non-diameter $m$-arc, then $a+1<t<a+N$. This implies $u=(s,t)$ crosses $(a,a+N)_r$, and $d(u,(a+1,a+N+1)_r)=1$ implies $(a,t)$ is an $m$-arc. Since $\mathfrak{U}$ is a Ptolemy diagram of type $D$, we have $(a,t)\in\mathfrak{U}$ by Pt3. This is a contradiction with the minimality of $a+N$. 
\end{itemize}
So we get the result $(a+1,a+N+1)_{r}\in\mathfrak{U}^\bot$.
\end{proof}
We give a "maximal" version of Lemma \ref{lemma5} without proof.
\begin{lemma}\label{lemma6}
Suppose $\mathfrak{U}$ is a Ptolemy diagram of type $D$ and $a$ is a vertex in $P$ satisfying $(a,-)\cap\mathfrak{U}\neq\emptyset$. Suppose $a+N$ is  maximal such that $(a,a+N)_r\in\mathfrak{U}$ (resp. $(a,a+N)_g\in\mathfrak{U}$), but $(a,a+N)_g\notin\mathfrak{U}$ (resp. $(a,a+N)_r\notin\mathfrak{U}$). Then $(a-m,a+N-m)_{r}\in\mathfrak{U}^\bot$ (resp. $(a-m,a+N-m)_{g}\in\mathfrak{U}^\bot$).
\end{lemma}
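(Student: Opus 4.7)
The plan is to mirror the proof of Lemma \ref{lemma5} step by step, but with the shift direction reversed: where that proof produced an arc lying beneath the minimum diameter $(a, a+N)_r$ via a one-step clockwise pivot, here I would produce an arc lying beyond the maximum diameter $(a, a+N)_r$ via an $m$-step counterclockwise pivot (which swaps the colour of the diameter, since $m$ is odd).

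Concretely, I would suppose for contradiction that $(a-m, a+N-m)_r \notin \mathfrak{U}^\bot$, obtaining some $m$-arc $u \in \mathfrak{U}$ crossing $(a-m, a+N-m)_r$ with $d(u, (a-m, a+N-m)_r) = 1$, and split on whether $u$ is a diameter. In the diameter case, $u = (s, s+N)_g$ must be green because same-colour diameters do not cross; the distance-$1$ condition, computed via Definition \ref{def:diatance} and the identity $N \equiv 1 \pmod{m}$, pins down $s \equiv a \pmod{m}$. Combined with $a - m < s < a + N - m$ and the hypothesis $(a, a+N)_g \notin \mathfrak{U}$ (so $s \neq a$), this forces $s \in \{a+m, a+2m, \ldots, a+(n-2)m\}$; in particular $s > a$. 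Applying Pt2 to the crossing diameters $(a, a+N)_r$ and $(s, s+N)_g$ delivers the $m$-arc $(a, s+N)$ into $\mathfrak{U}$, whose other endpoint $s + N > a + N$ contradicts the maximality of $a + N$.

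In the non-diameter case $u = (s, t)$, the same distance computation gives $s \equiv a \pmod{m}$. If $s = a$, then $u = (a, t) \in \mathfrak{U}$ directly, and the constraints that $t - a \equiv 1 \pmod{m}$, that $t$ lies on the boundary arc of $(a-m, a+N-m)_r$ opposite to $a$, and that $u$ is non-diameter together force $t > a + N$, contradicting maximality. If $s \neq a$, then $u$ also crosses $(a, a+N)_r$ and Pt3 applies: depending on whether $t - s > N$ or $t - s < N$, the candidate $m$-arc in $\mathfrak{U}$ is either $(a, t)$ (extracted directly from Pt3 applied to $(a, a+N)_r$ and $u$) or $(a, s+N)$ (extracted from Pt3 applied to $(a, a+N)_r$ and the partner arc $(s+N, t+N) \in \mathfrak{U}$, available by the $180^\circ$-rotational invariance of $\mathfrak{U}$). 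Either way, the derived arc has second endpoint beyond $a+N$, contradicting maximality.

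The main obstacle is the last sub-case, where the candidate $(a, t)$ happens to cross the rotated partner $(s+N, t+N)$, which blocks the direct application of Pt3 to $u$. I would resolve this by exploiting the $180^\circ$-rotational invariance of $\mathfrak{U}$ and applying Pt3 instead to $(a, a+N)_r$ together with $(s+N, t+N)$; in this sub-case the non-crossing requirement is met by $(a, s+N)$, which is forced into $\mathfrak{U}$ and has second endpoint $s+N > a+N$. Verifying the arithmetic ($t \equiv a + 1 \pmod{m}$, the various non-crossing conditions, and the required orderings on the boundary) in each sub-case is the same case analysis as in Lemma \ref{lemma5}, with clockwise and counterclockwise roles swapped.
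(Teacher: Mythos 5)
The paper states this lemma without proof, presenting it as the ``maximal'' counterpart of Lemma~\ref{lemma5}, and your argument mirrors that proof as the paper intends. The skeleton is right: assume $(a-m,a+N-m)_r\notin\mathfrak U^\bot$, extract $u\in\mathfrak U$ crossing it at distance~$1$, split on whether $u$ is a diameter, use $N\equiv 1\pmod m$ to force $s\equiv a\pmod m$ and hence $s>a$, and then push an arc $(a,c)$ with $c>a+N$ into $\mathfrak U$ via Pt2 or Pt3, contradicting the maximality of $a+N$. You are in fact more careful than the paper's own proof of Lemma~\ref{lemma5} in that you track the non-crossing clause of Pt3 and resolve it by applying Pt3 to the $180^\circ$-rotated partner $(s+N,t+N)$ when $(a,t)$ is blocked; one could equivalently keep applying Pt3 to $u$ itself and take the \emph{other} candidate $(a+N,s)=\overline{(a,s+N)}$, which survives the non-crossing test exactly when $(a,t)$ does not.

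There is one sub-case you skip. In the non-diameter case with $s\neq a$ you assert that $u$ crosses $(a,a+N)_r$, which fails precisely when $t=a+N$. That value is not excluded by your arithmetic: $a+N$ satisfies $t\equiv a+1\pmod m$ and lies strictly between $a+N-m$ and $a+2N-m$, and $u=(s,a+N)$ is non-diameter since $s\neq a$. But then $u$ and its partner $(s+N,a)$ each share an endpoint with $(a,a+N)_r$, so neither crosses it and Pt3 is unavailable. The fix is trivial and needs no Ptolemy condition: $u=(s,a+N)\in\mathfrak U$ with $s>a$ is the same pair as $(a,s+N)$, and $s+N>a+N$ already contradicts maximality. (The paper's proof of Lemma~\ref{lemma5} quietly makes the analogous simplification, asserting $a+1<t<a+N$ without ruling out $t=a+N$.) With this sub-case inserted, the proof is complete.
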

\begin{proposition}\label{prop1}
$\mathfrak{U}$ is a Ptolemy diagram of type $D$ if and only if $\mathfrak{U}={^\bot(\mathfrak{U}^\bot)}$.
\end{proposition}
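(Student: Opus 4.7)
The plan is to prove the two implications separately, with the reverse inclusion in the forward direction being the substantive part.

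For the ($\Leftarrow$) direction, if $\mathfrak{U} = {^\bot(\mathfrak{U}^\bot)}$, I would apply Lemma~\ref{lemma3} to the set $\mathfrak{U}^\bot$: this immediately gives that ${^\bot(\mathfrak{U}^\bot)}$ is a Ptolemy diagram of type $D$, and hence so is $\mathfrak{U}$.

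For the ($\Rightarrow$) direction, assume $\mathfrak{U}$ is a Ptolemy diagram. First I would show $\mathfrak{U} \subseteq {^\bot(\mathfrak{U}^\bot)}$ directly from the definitions: given $u \in \mathfrak{U}$, any $v \in \mathfrak{U}^\bot$ that crosses $u$ must, with $u$ serving as the witness in the defining condition of $\mathfrak{U}^\bot$, satisfy $d(u, v) > 1$, which is exactly what membership of $u$ in ${^\bot(\mathfrak{U}^\bot)}$ requires.

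The reverse inclusion ${^\bot(\mathfrak{U}^\bot)} \subseteq \mathfrak{U}$ I would prove by contrapositive. Let $u = (a, c)$ be an $m$-arc with $u \notin \mathfrak{U}$. The goal is to exhibit an $m$-arc $v \in \mathfrak{U}^\bot$ crossing $u$ with $d(u, v) = 1$; by (the contrapositive of) Lemma~\ref{lemma1}, this forces $u \notin {^\bot(\mathfrak{U}^\bot)}$. The arc $v$ is produced using the constructive Lemmas~\ref{lemma2}, \ref{lemma4}, \ref{lemma5}, \ref{lemma6}, each of which builds an explicit element of $\mathfrak{U}^\bot$ from an extremal element of $\mathfrak{U}$ sharing a chosen endpoint. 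The case split proceeds according to: whether $u$ is a diameter or not; whether $(a,-) \cap \mathfrak{U}$ and $(c,-) \cap \mathfrak{U}$ are non-empty; and whether any diameter at $a$ or $c$ belongs to $\mathfrak{U}$. In the generic case where, say, $(c,-) \cap \mathfrak{U}$ is non-empty and contains no diameter at $c$, choosing $b$ maximal with $(c, b) \in \mathfrak{U}$ and appealing to Lemma~\ref{lemma2} yields an arc $(c-m, b) \in \mathfrak{U}^\bot$ that, by the geometry of the choice of $b$ relative to $a$, crosses $u$ at distance one. Symmetric and dual choices (Lemma~\ref{lemma4} when the minimal $b$ is the correct one, Lemmas~\ref{lemma5}, \ref{lemma6} when a diameter is present) cover the remaining configurations; in the degenerate subcase where both $(a,-) \cap \mathfrak{U}$ and $(c,-) \cap \mathfrak{U}$ are empty, a direct verification using the Ptolemy conditions shows that the translated arc $(a+1, c+1)$ itself already lies in $\mathfrak{U}^\bot$.

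I expect the main obstacle to lie in the subcases involving diameters. Because $m$ is odd, the translation $\tau_m$ swaps the two colours of diameters, so the conditions (Pt2) and (Pt3) couple red and green diameters with non-diameter arcs in a subtle way. When $u$ is itself a diameter of a specified colour, or when the extremal arc at an endpoint of $u$ is a diameter, careful colour bookkeeping is required to ensure that the element produced by Lemma~\ref{lemma5} or \ref{lemma6} has the correct colour to cross $u$ with unit distance. A secondary but persistent nuisance is to verify in each subcase that the constructed $v$ genuinely crosses $u$, reducing to small but careful cyclic ordering arguments on the $2N$-gon.
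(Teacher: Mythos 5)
Your overall strategy is essentially the paper's, dressed up as a contrapositive: the $(\Leftarrow)$ direction via Lemma~\ref{lemma3}, the easy inclusion $\mathfrak{U}\subseteq{^\bot(\mathfrak{U}^\bot)}$, and then a case split (diameter vs.\ not, presence or absence of diameters at the endpoints) producing an element of $\mathfrak{U}^\bot$ crossing $u$ at distance one via Lemmas~\ref{lemma2}--\ref{lemma6}. The paper argues directly with $u\in{^\bot(\mathfrak{U}^\bot)}$ assumed throughout and derives contradictions, which is logically the same thing; the one structural difference is that the paper \emph{opens} by using Lemma~\ref{lemma1} with the short arc $(a-m,a+1)$ to show $(a,-)\cap\mathfrak{U}\ne\emptyset$ and $(b,-)\cap\mathfrak{U}\ne\emptyset$, so your ``degenerate subcase'' never arises there, whereas in your contrapositive you must handle it separately.

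And that is precisely where your argument has a genuine gap: the claim that when $(a,-)\cap\mathfrak{U}=\emptyset=(c,-)\cap\mathfrak{U}$ the arc $(a+1,c+1)$ lies in $\mathfrak{U}^\bot$ is false. Take $m=3$, $n=5$ (so $N=13$, $2N=26$), $\mathfrak{U}=\{(1,8),(14,21)\}$ (a Ptolemy diagram, the two arcs being non-crossing), and $u=(4,11)$. Then both $(4,-)\cap\mathfrak{U}$ and $(11,-)\cap\mathfrak{U}$ are empty, yet $(1,8)$ crosses $(5,12)$ with $d\bigl((1,8),(5,12)\bigr)=(5-1)\bmod 3=1$, so $(a+1,c+1)=(5,12)\notin\mathfrak{U}^\bot$. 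The correct witness in this case is the \emph{short} arc $(a-m,a+1)$ (exactly the arc the paper uses): since it spans only $m+1$ boundary edges, any $w\in\mathfrak{U}$ crossing it with $d\bigl(w,(a-m,a+1)\bigr)=1$ is forced by the arithmetic to have $a$ as an endpoint, which is excluded by $(a,-)\cap\mathfrak{U}=\emptyset$; hence $(a-m,a+1)\in\mathfrak{U}^\bot$, and since it crosses $u$ at distance one, $u\notin{^\bot(\mathfrak{U}^\bot)}$. With this substitution (and noting it already works when just one of the two sets is empty), your degenerate subcase is repaired and the rest of your outline aligns with the paper's case analysis.
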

\begin{proof}
Suppose $\mathfrak{U}={^\bot(\mathfrak{U}^\bot)}$. Then $\mathfrak{U}$ is a Ptolemy diagram of type $D$ by Lemma \ref{lemma3}.

Now suppose $\mathfrak{U}$ is a Ptolemy diagram of type $D$. The inclusion $\mathfrak{U}\subseteq {^\bot(\mathfrak{U}^\bot)}$ is clear. Suppose $u=(a,b)$ is an $m$-arc in $^\bot(\mathfrak{U}^\bot)$, we need to show $u\in\mathfrak{U}$.

Firstly, we show $(a,-)\cap\mathfrak{U}\neq\emptyset$ and $(b,-)\cap\mathfrak{U}\neq\emptyset$.

Since $(a-m,a+1)$ is an $m$-arc crossing $u=(a,b)$ and $d((a,b),(a-m,a+1))=1$, we have $(a-m,a+1)\not\in\mathfrak{U}^\bot$ by Lemma \ref{lemma1}. Then there exists an $m$-arc $w\in\mathfrak{U}$ such that $w$ crosses $(a-m,a+1)$ and $d(w,(a-m,a+1))=1$, which implies $(a,-)\cap\mathfrak{U}\neq\emptyset$. The fact $(b,-)\cap\mathfrak{U}\neq\emptyset$ can be proved similarly.

We have the following two cases:
\begin{itemize}
  \item [(1)] Suppose $u=(a,a+N)$ is a diameter. Without loss of generality, let $u=(a,a+N)_r$.

We have got $(a,-)\cap\mathfrak{U}\neq\emptyset$, we show $(a,-)\cap\mathfrak{U}$ contains diameter arcs. Suppose, on the contrary, $(a,-)\cap\mathfrak{U}$ does not contain diameter arcs, i.e. $(a,a+N)_r\notin\mathfrak{U}$ and $(a,a+N)_g\notin\mathfrak{U}$, we need to show there is a contradiction.

Suppose $c$ is maximal such that $(a,c)\in\mathfrak{U}$. If $c<a+N$, then $(a-m,c)$ is an $m$-arc in $\mathfrak{U}^\bot$ by Lemma \ref{lemma2}. Since $(a-m,c)$ crosses $(a,a+N)_r$ and $d((a,a+N)_r,(a-m,c))=1$, this is a contradiction with the facts $(a-m,c)\in\mathfrak{U}^\bot$ and $(a,a+N)_r\in {^\bot(\mathfrak{U}^\bot)}$. So we have that there exist $m$-arcs $(a,c)$  in $\mathfrak{U}$ with $c>a+N$. Similarly, there exist $m$-arcs $(a,d)$  in $\mathfrak{U}$ with $d<a+N$. Suppose $d$ is maximal such that $(a,d)\in\mathfrak{U}$ with $d<a+N$, and suppose $c$ is minimal such that $(a,c)\in\mathfrak{U}$ with $c>a+N$. If $a+N<c<d+N$, then $(a+N,d+N)$ crosses $(a,c)$ in $\mathfrak{U}$, which implies $(a,a+N)_r,g\in\mathfrak{U}$ by Pt1, a contradiction with assumption. So let $c>d+N$. Now consider the $m$-arc $(d,c-m+1)$ (If $(d,c-m+1)$ is a diameter, we consider $(d,c-m+1)_g$). Since $(d,c-m+1)$ is an $m$-arc crossing $(a,a+N)_r$, and satisfying $d((a,a+N)_r,(d,c-m+1))=1$. Then $(d,c-m+1)\not\in\mathfrak{U}^\bot$ by Lemma \ref{lemma1}, so there exists $w\in\mathfrak{U}$ such that $w$ crosses $(d,c-m+1)$ and $d(w,(d,c-m+1))=1$. There are the following two cases.
\begin{itemize}
  \item [(i)] If $w=(s,t)$ is a non-diameter $m$-arc, then $d<s<c-m+1$. Since $d(w,(d,c-m+1))=1$, we have $(a,s)$ is an $m$-arc. Because $\mathfrak{U}$ is a Ptolemy diagram of type $D$, we have $(a,s)\in\mathfrak{U}$ by Pt1, this is a contradiction with the maximality of $d$ or the minimality of $c$.
  \item [(ii)] If  $w$ is a diameter, then $w=(s,s+N)$ for an integer $s$. Without loss of generality, suppose $w=(s,s+N)_r$. Then $(s,s+N)_r$ crosses $(a,c)$ or $(a,d)$. Since $\mathfrak{U}$ is a Ptolemy diagram of type $D$, we have $(a,a+N)_r\in\mathfrak{U}$ by Pt3, a contradiction with assumption.
\end{itemize}
So we have $(a,-)\cap\mathfrak{U}$ contains diameter arcs. Suppose $(a,a+N)_g\in\mathfrak{U}$, but $(a,a+N)_r\notin\mathfrak{U}$, we show there is a contradiction. Then we complete the proof of $(a,a+N)_r\in\mathfrak{U}$.

If $a+N$ is maximal (resp. minimal) such that $(a,a+N)_g\in\mathfrak{U}$, then $(a-m,a+N-m)_{g}\in\mathfrak{U}^\bot$ by Lemma \ref{lemma6} (resp. $(a+1,a+N+1)_{g}\in\mathfrak{U}^\bot$ by Lemma \ref{lemma5}). Since $(a-m,a+N-m)_{g}$ (resp. $(a+1,a+N+1)_{g}$) crosses $(a,a+N)_r\in{^\bot(\mathfrak{U}^\bot)}$ and $d((a,a+N)_r,(a-m,a+N-m)_{g})=1$ (resp. $d((a,a+N)_r,(a+1,a+N+1)_{g})=1$),  this is a contradiction by Lemma \ref{lemma1}. Then there exist $m$-arcs $(a,c)$  in $\mathfrak{U}$ with $c>a+N$ and $m$-arcs $(a,d)$  in $\mathfrak{U}$ with $d<a+N$. Suppose $d$ is maximal such that $(a,d)\in\mathfrak{U}$ with $d<a+N$, and suppose $c$ is minimal such that $(a,c)\in\mathfrak{U}$ with $c>a+N$. If $a+N<c<d+N$, then $(a+N,d+N)$ crosses $(a,c)$ in $\mathfrak{U}$, which implies $(a,a+N)_{r,g}\in\mathfrak{U}$ by Pt1, a contradiction with assumption. So let $c>d+N$. Now consider the diameter $(c-m+1,c+N-m+1)_g$. On one hand, since the diameter $(c-m+1,c+N-m+1)_g$ crosses $(a,a+N)_r$ and satisfies $d((a,a+N)_r,(c-m+1,c+N-m+1)_g)=1$, $(c-m+1,c+N-m+1)_g\not\in\mathfrak{U}^\bot$ by Lemma \ref{lemma1}. On the other hand, if $(c-m+1,c+N-m+1)_g\not\in\mathfrak{U}^\bot$, then there exists an $m$-arc $w\in\mathfrak{U}$ such that $w$ crosses $(c-m+1,c+N-m+1)_g$ and $d(w,(c-m+1,c+N-m+1)_g)=1$. There are the following two cases.
\begin{itemize}
  \item [(i)] If $w=(s,t)$ is a non-diameter $m$-arc, then $c+N-m+1<s<c-m+1+2N$ (mod $2N$). In this case, $w$ crosses $(a,c)$ or $(a,d)$ or $(a,a+N)_g$. Since $d(w,(d,c-m+1))=1$, we have $(a,s)$ is an $m$-arc. Because $\mathfrak{U}$ is a Ptolemy of type $D$, we have $(a,s)\in\mathfrak{U}$ by Pt1 or Pt3, this is a contradiction with the maximality of $d$ or the minimality of $c$.
  \item [(ii)] If $w$ is a diameter, then $w=(s,s+N)_r$ with $c+N-m+1<t<c-m+1+2N$ (mod $2N$). Since $(s,s+N)_r$ crosses $(a,a+N)_g\in\mathfrak{U}$ and $d((s,s+N)_r,(c-m+1,c+N-m+1)_g)=1$, we have $(a,s)$ is an $m$-arc. Since $\mathfrak{U}$ is a Ptolemy of type $D$, we have $(a,s)\in\mathfrak{U}$ by Pt2, a contradiction with the maximality of $d$ or the minimality of $c$.
\end{itemize}
Thus, we prove there is a contradiction.

\item [(2)] Suppose $u=(a,b)$, with $a<b<a+N$, is a non-diameter $m$-arc.

We have got the fact that $(a,-)\cap\mathfrak{U}\neq\emptyset$ and $(b,-)\cap\mathfrak{U}\neq\emptyset$. We divide into the following three cases based on whether $(a,a+N)\in\mathfrak{U}$ or not.
\begin{itemize}
  \item [(i)] Suppose $(a,a+N)_r\notin\mathfrak{U}$ and $(a,a+N)_g\notin\mathfrak{U}$. Then we can prove $(a,b)\in\mathfrak{U}$ as (1).
  \item [(ii)] Suppose both $(a,a+N)_r\in\mathfrak{U}$ and $(a,a+N)_g\in\mathfrak{U}$. If the diameter $(b,b+N)\in\mathfrak{U}$ (red or green), then $(a,a+N)$ and $(b,b+N)$ of different colors  cross in $\mathfrak{U}$. Since $\mathfrak{U}$ is a Ptolemy of type $D$, we have $(a,b)\in\mathfrak{U}$ by Pt2. If $(b,b+N)_r\notin\mathfrak{U}$ and $(b,b+N)_g\notin\mathfrak{U}$, then we can prove $(a,b)\in\mathfrak{U}$ as (1).
    \item [(iii)] Suppose $(a,a+N)_r\in\mathfrak{U}$, but $(a,a+N)_g\notin\mathfrak{U}$, the case $(a,a+N)_g\in\mathfrak{U}$, but $(a,a+N)_r\notin\mathfrak{U}$, can be proved similarly. If $(b,-)\cap\mathfrak{U}$ does not contain diameter arcs, then we can prove $(a,b)\in\mathfrak{U}$ as (1). If $(b,b+N)_g\in\mathfrak{U}$, then $(a,a+N)_r$ crosse $(b,b+N)_g$ in $\mathfrak{U}$, and we have $(a,b)\in\mathfrak{U}$ by Pt2. Now suppose $(b,b+N)_r\in\mathfrak{U}$ but $(b,b+N)_g\notin\mathfrak{U}$. If there exists $c$ such that $(a,c)\in\mathfrak{U}$ with $b\leq c<a+N$, then $(a,c)$ (except $c=b$) crosses $(b,b+N)_r$ in $\mathfrak{U}$. Since $\mathfrak{U}$ is a Ptolemy of type $D$, we have $(a,b)\in\mathfrak{U}$ by Pt3. Suppose there are no $m$-arcs $(a,c)\in\mathfrak{U}$ with $b\leq c<a+N$. We will show there is a contradiction, and then we complete our proof.

        Suppose $d$ is maximal such that $(a,d)\in\mathfrak{U}$ with $d<b$, if no such $d$ exists, let $d=a+1$. Now we consider the diameter $(d,d+N)_r$. Since $(d,d+N)_r$ crosses $(a,b)$ and $d((a,b),(d,d+N)_r)=1$, we have $(d,d+N)_r\notin\mathfrak{U}^\bot$ by Lemma \ref{lemma1}. Then there exists an $m$-arc $w$ in $\mathfrak{U}$ such that $w$ crosses $(d,d+N)_r$ and $d(w,(d,d+N)_r)=1$. Firstly, suppose $w=(s,s+N)_g$ is a diameter with $d<s<d+N$, then $(s,s+N)_g$ crosses $(a,a+N)_r$, and $(s,s+N)_g$ crosses $(a,d)$ if $a+N<s<d+N$. Since $\mathfrak{U}$ is a Ptolemy of type $D$ and $d(w,(d,d+N)_r)=1$, we have $(a,s)\in\mathfrak{U}$ with $d<s<a+N$ by Pt2, or $(a,a+N)_g\in\mathfrak{U}$ when $a+N<s<d+N$ by Pt2, a contradiction with the assumption that there are no $m$-arcs $(a,c)\in\mathfrak{U}$ with $b\leq c<a+N$ and $(a,a+N)_g\notin\mathfrak{U}$, or the maximality of $d$. Now suppose $w=(s,t)$, a non-diameter arc with $d<t<d+N$. Then $(s,t)$ crosses $(a,d)$ when $a<s<d$, or crosses $(a,a+N)_r$ when $s<a$. Since $\mathfrak{U}$ is a Ptolemy of type $D$ and $d(w,(d,d+N)_r)=1$, we have $(a,t)$ is an $m$-arc and $(a,t)\in\mathfrak{U}$, a contradiction with the assumption that there are no $m$-arcs $(a,c)\in\mathfrak{U}$ with $b\leq c<a+N$, or the maximality of $d$.        
\end{itemize}

\end{itemize}

\end{proof}
Now we get the  main result of this paper.
\begin{theorem}\label{mainresult}
Let $\X$ be a subcategory of $\C_{D_{n}}^{m}$, and $\XX$ be the corresponding set of $m$-arcs in $P$. Then the following statements are equivalent.
\begin{itemize}
  \item [(1)] $\X$ is closed under extensions.
  \item [(2)] $(\X,\X^\perp)$ is a torsion pair.
  \item [(3)] $\X={^\perp(\X^\perp)}$.
  \item [(4)] $\XX$ is a Ptolemy diagram of type $D$.
\end{itemize}
\end{theorem}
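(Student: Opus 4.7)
The plan is to combine Iyama--Yoshino's Lemma \ref{add} with the combinatorial Proposition \ref{prop1} through the dictionary between $\Ext^{1}$ and crossings provided by Lemma \ref{lemma0}.

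First, the equivalences (1) $\Leftrightarrow$ (2) $\Leftrightarrow$ (3) are immediate from Lemma \ref{add}, which is a general result for any extension--closed subcategory of a triangulated category and requires no type-specific input. The real content of the theorem is therefore the equivalence (3) $\Leftrightarrow$ (4), which I would reduce to Proposition \ref{prop1} after translating the categorical double orthogonal into its combinatorial counterpart.

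To carry out this translation, recall from Lemma \ref{lemma0} that for any two $m$-arcs $u,v$ in $P$ one has $\Ext^{1}_{\C_{D_n}^{m}}(M_v,M_u)\ne 0$ exactly when $u$ and $v$ cross with $d(v,u)=1$. Writing $\X^{\perp_1}:=\{M\mid \Ext^{1}(\X,M)=0\}$ and ${}^{\perp_1}\X:=\{M\mid \Ext^{1}(M,\X)=0\}$, and comparing with the defining conditions of $\XX^{\bot}$ and ${}^{\bot}\XX$ in Section~3.1, this immediately identifies $\XX^{\bot}$ with the arc-set corresponding to $\X^{\perp_1}$, and ${}^{\bot}\XX$ with the arc-set of ${}^{\perp_1}\X$. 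Now use the general triangulated identities $\X^{\perp_1}=\X^{\perp}[-1]$ and ${}^{\perp_1}\Y={}^{\perp}(\Y[1])$ (both arising from $\Ext^{1}(A,B)=\Hom(A,B[1])$) to compute
\[
{}^{\perp_1}(\X^{\perp_1})={}^{\perp}\bigl((\X^{\perp_1})[1]\bigr)={}^{\perp}\bigl(\X^{\perp}[-1][1]\bigr)={}^{\perp}(\X^{\perp}).
\]
Under the bijection between subcategories of $\C_{D_n}^{m}$ and $180^{\circ}$-symmetric sets of $m$-arcs, this shows that $\X={}^{\perp}(\X^{\perp})$ if and only if $\XX={}^{\bot}(\XX^{\bot})$.

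With this identification in place, Proposition \ref{prop1} finishes the argument, since it states precisely that $\XX={}^{\bot}(\XX^{\bot})$ is equivalent to $\XX$ being a Ptolemy diagram of type $D$. The genuine combinatorial difficulty, namely the stability statement for the double orthogonal, has already been handled in Proposition \ref{prop1} by a case analysis on diameter versus non-diameter arcs; the present theorem merely lifts that result through the categorification dictionary, and I expect no additional obstacle here since Lemma \ref{lemma0} treats diameter and non-diameter arcs uniformly.
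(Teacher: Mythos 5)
Your proposal is correct and follows essentially the same route as the paper: the equivalences (1)--(3) are delegated to Lemma \ref{add}, and (3) $\Leftrightarrow$ (4) is obtained by translating the categorical double orthogonal into the combinatorial one via Lemma \ref{lemma0} (which yields the identifications $\X^{\perp}[-1]\leftrightarrow\XX^{\bot}$ and ${}^{\perp}\X[1]\leftrightarrow{}^{\bot}\XX$) and then invoking Proposition \ref{prop1}. Your explicit computation ${}^{\perp_1}(\X^{\perp_1})={}^{\perp}(\X^{\perp})$ spells out a shift bookkeeping that the paper leaves implicit, but the underlying argument is identical.
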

\begin{proof}
The equivalence of (1)-(3) is obvious by Lemma \ref{add}. By Lemma \ref{lemma0}, the subcategory $\X^\perp[-1]$ (resp. $^\perp\X[1]$) corresponds to $\XX^\perp$ (resp. $^\perp\XX$). The equivalence of (3) and (4) is derived from Lemma \ref{lemma3} and Proposition \ref{prop1}.
\end{proof}
\section{Applications}

\subsection{Classification of $m$-rigid objects and $m$-cluster tilting objects in $\C_{D_{n}}^{m}$ }
Firstly, we recall the definition of $m$-rigid subcategories and $m$-cluster tilting subcategories in $\C_{D_{n}}^{m}$.

\begin{definition}[\cite{KR,T,Z,IY,J}]
Let $\X$ be a subcategory of $\C_{D_{n}}^{m}$.
\begin{itemize}
  \item [(1)] The subcategory $\X$ is called $m$-rigid if $\Ext_{\C_{D_{n}}^{m}}^{i}(\X,\X)=0$ for all $1\leq i\leq m$. The subcategory $\X$ is called maximal $m$-rigid if $\X$ is maximal with respect to this property, i.e., if $\Ext_{\C_{D_{n}}^{m}}^{i}(\X\oplus\add M,\X\oplus\add M)=0$ for all $1\leq i\leq m$, then $M\in\X$. An object $X$ is called an $m$-rigid object if $\add X$ is $m$-rigid, and $X$ is called a maximal $m$-rigid object if $\add X$ is $m$-maximal rigid.
  \item [(2)] $\X$ is called $m$-cluster tilting if $X\in\X$ if and only if
      $\Ext_{\C_{D_{n}}^{m}}^{i}(X,\X)=0$ for all $1\leq i\leq m$ (or equivalently, if and only if $\Ext_{\C_{D_{n}}^{m}}^{i}(\X,X)=0$ for all $1\leq i\leq m$).
      An object $X$ is called an $m$-cluster tilting object if $\add\;X$ is $m$-cluster tilting.
\end{itemize}
We point out the fact that for an object $X$ of $\C_{D_{n}}^{m}$, $X$ is $m$-cluster tilting if and only if $X$ is  maximal $m$-rigid \cite[Theorem 3.3]{ZZ}.
\end{definition}
\begin{definition}[\cite{J}]
Let $\mathfrak{U}$ be a set of $m$-arcs in $P$.
\begin{itemize}
  \item [(1)] $\mathfrak{U}$ is called a partial $(m+2)$-angulation of $P$ if $\mathfrak{U}$ is a set of non-crossing $m$-arcs in $P$.
  \item [(2)] $\mathfrak{U}$ is called an $(m+2)$-angulation of $P$ if $\mathfrak{U}$ is a maximal set of non-crossing $m$-arcs in $P$.
\end{itemize}
\end{definition}
\begin{proposition}\label{prop2}
Let $\X$ be a subcategory of $\C_{D_{n}}^{m}$, and $\XX$ be the corresponding set of $m$-arcs in $P$. Then the following statements are equivalent.
\begin{itemize}
  \item [(1)] $\X$ is $m$-rigid.
  \item [(2)] $\XX$ is a partial $(m+2)$-angulation .
  \item [(3)] $(\X,\X^\perp)$ is a torsion pair such that $\X[1]\ast\ldots\ast \X[m]\subseteq\X^\perp$.
\end{itemize}
\end{proposition}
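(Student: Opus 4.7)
My plan is to prove the three equivalences by closing the circle $(1) \Leftrightarrow (2)$, $(1) \Rightarrow (3)$, $(3) \Rightarrow (1)$, leaning on Lemma \ref{lemma0} and Theorem \ref{mainresult} as the main inputs.

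For $(1) \Leftrightarrow (2)$, I would apply Lemma \ref{lemma0} directly. Being $m$-rigid means $\Ext^{j}_{\C_{D_{n}}^{m}}(M_u,M_v)=0$ for all $M_u,M_v\in\X$ and all $1\le j\le m$. By Lemma \ref{lemma0} this is equivalent to saying no two arcs $u,v\in\XX$ cross with distance in $\{1,\dots,m\}$, which for $m$-arcs is the same as saying no two arcs of $\XX$ cross at all. Hence $\X$ is $m$-rigid precisely when $\XX$ is a partial $(m+2)$-angulation.

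For $(1)\Rightarrow(3)$, I would first observe that $m$-rigidity implies $\Ext^{1}(\X,\X)=0$, so any triangle $X\to M\to Y\to X[1]$ with $X,Y\in\X$ splits and $M\cong X\oplus Y\in\X$, i.e. $\X$ is closed under extensions. By Theorem \ref{mainresult} this already gives that $(\X,\X^\perp)$ is a torsion pair. To show $\X[1]\ast\cdots\ast\X[m]\subseteq\X^\perp$, the key point is the following closure property: if $\mathcal{A},\mathcal{B}\subseteq\C_{D_{n}}^{m}$ both satisfy $\Hom(\X,\mathcal{A})=\Hom(\X,\mathcal{B})=0$, then for any $Y\in\mathcal{A}\ast\mathcal{B}$ there is a triangle $A\to Y\to B\to A[1]$ with $A\in\mathcal{A},B\in\mathcal{B}$, and the long exact sequence
\[
\Hom(X,A)\longrightarrow\Hom(X,Y)\longrightarrow\Hom(X,B)
\]
forces $\Hom(X,Y)=0$ for every $X\in\X$. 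Since $\Hom(\X,\X[j])=\Ext^{j}(\X,\X)=0$ for each $1\le j\le m$ by $m$-rigidity, iterating this closure property yields $\X[1]\ast\cdots\ast\X[m]\subseteq\X^\perp$.

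For $(3)\Rightarrow(1)$, I would use the trivial inclusion $\X[j]\subseteq\X[1]\ast\cdots\ast\X[m]$ (obtained by taking the zero object in every factor except the $j$-th) for each $1\le j\le m$. From $\X[j]\subseteq\X^\perp$ we deduce $\Ext^{j}(\X,\X)=\Hom(\X,\X[j])=0$ for all $1\le j\le m$, hence $\X$ is $m$-rigid. I expect the main technical point, though routine, to be the inductive argument in $(1)\Rightarrow(3)$ that propagates the Hom-vanishing across the iterated $\ast$-product; everything else is a direct unpacking of Lemma \ref{lemma0} and Theorem \ref{mainresult}.
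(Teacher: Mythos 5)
Your proposal is correct and follows essentially the same route as the paper: $(1)\Leftrightarrow(2)$ via Lemma \ref{lemma0}, then $(1)\Leftrightarrow(3)$ by invoking Theorem \ref{mainresult} to obtain the torsion pair, checking $\X[i]\subseteq\X^\perp$ from the vanishing $\Ext^i(\X,\X)=0$, and using the trivial inclusion $\X[j]\subseteq\X[1]\ast\cdots\ast\X[m]$ for the converse. The only cosmetic difference is that you justify extension-closure of $\X$ by the triangle-splitting argument and spell out the long-exact-sequence step showing the $\ast$-product stays inside $\X^\perp$, whereas the paper obtains these by observing that a non-crossing $\XX$ is vacuously a Ptolemy diagram and that $\X^\perp$ is automatically extension-closed as a torsion-free class; both fill the same small gaps.
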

\begin{proof}
The equivalence of $(1)$ and $(2)$ is clear by Lemma \ref{lemma0}. Now we prove the equivalence of $(1)$ and $(3)$.

Suppose $\X$ is $m$-rigid. Then $\XX$ is a Ptolemy diagram of type $D$, so $(\X,\X^\perp)$ is a torsion pair by Theorem \ref{mainresult}. Since $\Ext^i_{\C_{D_{n}}^{m}}(\X,\X)=0=\Hom_{\C_{D_{n}}^{m}}(\X,\X[i])$ for all $1\leq i\leq m$, $\X[i]\subseteq\X^\perp$ for all $i$, thus $\X[1]\ast\ldots\ast \X[m]\subseteq\X^\perp$.

Conversely, if $(\X,\X^\perp)$ is a torsion pair such that $\X[1]\ast\ldots\ast \X[m]\subseteq\X^\perp$, then $\Ext_{\C_{D_{n}}^{m}}^i(\X,\X)=\Hom_{\C_{D_{n}}^{m}}(\X,\X[i])=0$ for all $1\leq i\leq m$, so $\X$ is $m$-rigid.
\end{proof}
By the proposition above, we classify $m$-cluster tilting objects of $\C_{D_{n}}^{m}$. Note that classification of $m$-cluster tilting objects and $m$-cluster tilting subcategories are equivalent for $\C_{D_{n}}^{m}$.
\begin{corollary}
	Let $\X$ be a subcategory of $\C_{D_{n}}^{m}$, and $\XX$ be the corresponding set of $m$-arcs in $P$. Then the following statements are equivalent.
\begin{itemize}
  \item [(1)] $\X$ is $m$-cluster tilting.
  \item [(2)] $\XX$ is an $(m+2)$-angulation of $P$.
  \item [(3)] $(\X,\X[1]\ast\ldots\ast \X[m])$ is a torsion pair.
\end{itemize}
\end{corollary}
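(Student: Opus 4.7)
The plan is to reduce everything to Proposition~\ref{prop2} plus two standard facts about $m$-cluster tilting subcategories in $(m+1)$-Calabi--Yau triangulated categories: (i) $m$-cluster tilting coincides with maximal $m$-rigid (as recorded in the definition), and (ii) $\C_{D_n}^m=\X*\X[1]*\cdots*\X[m]$ whenever $\X$ is $m$-cluster tilting (cf.\ \cite{IY} and the higher-$m$ generalisations). The equivalence $(1)\Leftrightarrow(2)$ is then formal: $\X$ is $m$-cluster tilting iff it is maximal $m$-rigid by (i), and $\XX$ is an $(m+2)$-angulation iff it is a maximal partial $(m+2)$-angulation by definition, so maximality transports across the inclusion-preserving bijection of Proposition~\ref{prop2}$(1)\Leftrightarrow(2)$.

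For $(1)\Rightarrow(3)$: $m$-cluster tilting implies $m$-rigid, so $\Hom(\X,\X[i])=0$ for $1\le i\le m$, which by exactness of $\Hom$ along filtrations extends to $\Hom(\X,\X[1]*\cdots*\X[m])=0$. Combined with fact (ii), which gives $\C_{D_n}^m=\X*(\X[1]*\cdots*\X[m])$, these are exactly the two defining conditions of a torsion pair.

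For $(3)\Rightarrow(1)$: The $\Hom$-vanishing immediately gives that $\X$ is $m$-rigid. To establish maximality, take $X$ with $\Ext^i(\X,X)=0$ for $1\le i\le m$. By Serre duality in the $(m+1)$-CY category $\C_{D_n}^m$ (Proposition~\ref{facts:m-cluster category}), this translates to $\Hom(X,\X[i])=0$ in the same range, and by induction on filtration length, $\Hom(X,\X[1]*\cdots*\X[m])=0$. The torsion pair gives a triangle $X'\xrightarrow{\alpha}X\xrightarrow{\beta}Y\to X'[1]$ with $X'\in\X$ and $Y\in\X[1]*\cdots*\X[m]$; since $\beta\in\Hom(X,Y)=0$, the long exact sequence $\Hom(X,X')\to\Hom(X,X)\to\Hom(X,Y)=0$ yields a section of $\alpha$, making $X$ a direct summand of $X'\in\X$. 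By closure under summands, $X\in\X$, so $\X$ is $m$-cluster tilting.

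The main obstacle is fact (ii), invoked in $(1)\Rightarrow(3)$: the structural decomposition $\C_{D_n}^m=\X*\X[1]*\cdots*\X[m]$ for $m$-cluster tilting $\X$, which is where the $(m+1)$-CY hypothesis enters essentially. The $(3)\Rightarrow(1)$ direction is the cleanest self-contained piece, relying only on Serre duality and long exact sequences, while $(1)\Leftrightarrow(2)$ is purely formal given Proposition~\ref{prop2}.
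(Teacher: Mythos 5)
Your proof is correct and covers all three equivalences, but it departs from the paper's route in one genuine way. For $(3)\Rightarrow(1)$ the paper cites Lemma~3.9 of Zhou--Zhu: it observes that the torsion pair gives $\C_{D_n}^m=\X*\X[1]*\cdots*\X[m]$ and that $\X$ is $m$-rigid by Proposition~\ref{prop2}, and then outsources the conclusion. You instead give a self-contained argument: use Serre duality in the $(m+1)$-CY category to convert $\Ext^i(\X,X)=0$ into $\Hom(X,\X[j])=0$ for $1\le j\le m$, extend to $\Hom(X,\X[1]*\cdots*\X[m])=0$ by induction on filtration length, and then split the torsion triangle for $X$ to conclude $X$ is a summand of its torsion part. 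This is the standard splitting trick and it works; it has the advantage of being entirely internal to the setup already established in the paper. For $(1)\Leftrightarrow(2)$ the paper simply cites Jacquet-Malo; you instead derive it from the inclusion-preserving bijection of Proposition~\ref{prop2}$(1)\Leftrightarrow(2)$ together with ``$m$-cluster tilting $=$ maximal $m$-rigid'' and ``$(m+2)$-angulation $=$ maximal partial $(m+2)$-angulation,'' which is a clean observation that the paper could have made but didn't. For $(1)\Rightarrow(3)$ both you and the paper lean on the same external input — your ``fact (ii)'' is exactly what the paper's citation of \cite[Theorem~3.1]{IY} supplies — so you have correctly identified where the essential outside ingredient lives. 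One small caveat: the equation ``$m$-cluster tilting $\Leftrightarrow$ maximal $m$-rigid'' is stated in the paper for objects via \cite[Theorem~3.3]{ZZ}; since $\C_{D_n}^m$ has only finitely many indecomposables, every subcategory is $\add$ of an object and the transfer to subcategories is immediate, but it is worth saying so explicitly.
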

\begin{proof}
The equivalence of $(1)$ and $(2)$ is clear, which was obtained in \cite[Theorem 4.5]{J}. Now we prove the equivalence of $(1)$ and $(3)$.

Suppose $\X$ is an $m$-cluster tilting subcategory. Then $(\X,\X[1]\ast\ldots\ast \X[m])$ is a torsion pair by Theorem 3.1 in \cite{IY}, so $\X[1]\ast\ldots\ast \X[m]=\X^\perp$.

Conversely, suppose $(\X,\X\ast\ldots\ast \X[m-1])$ is a torsion pair. Then we have $\C_{D_{n}}^{m}=\X\ast\ldots\ast \X[m]$ and $\X$ is $m$-rigid by Proposition \ref{prop2}, $\X$ is an $m$-cluster tilting subcategory is a direct consequence of Lemma 3.9 in \cite{ZZ2}.
\end{proof}

\subsection{Relationship with the cluster category of type $D_{n}$}
When $m=1$, $\C_{D_{n}}^{1}$ (denote by $\C_{D_{n}}$) is the classical cluster category of type $D_{n}$. In this subsection, we use our classification of torsion pairs in
$\C_{D_{n}}^{m}$ to recover one main result in \cite{HJR3}, which gives a correspondence between torsion pairs and Ptolemy diagrams of type $D$.

\begin{corollary}\cite[Theorem 1.1]{HJR3}
Let $\X$ be a subcategory of $\C_{D_{n}}$, and $\XX$ be the corresponding set of arcs in the $2n$-gon. Then the following statements are equivalent:
\begin{itemize}
  \item [(1)] $(\X,\X^\perp)$ is a torsion pair in $\C_{D_{n}}$.
  \item [(2)] $\XX$ is a Ptolemy diagram of type $D$.
\end{itemize}
\end{corollary}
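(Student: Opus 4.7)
The plan is to derive this corollary as the $m=1$ specialization of Theorem \ref{mainresult}. First I would observe that when $m=1$ we have $N = mn - m + 1 = n$, so the regular $2N$-gon $P$ is precisely the regular $2n$-gon appearing in Holm--J\o{}rgensen--Rubey's geometric model of $\C_{D_n}$. Under Proposition \ref{Pro1}, indecomposable objects of $\C_{D_n}^1 = \C_{D_n}$ are parametrized by paired arcs in this $2n$-gon (including the two colours of diameters), matching the combinatorial model used in \cite{HJR3}.

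Next I would invoke the remark immediately after Definition \ref{c2}, which explicitly states that for $m=1$ the three Ptolemy conditions (Pt1), (Pt2), (Pt3) reduce to the original conditions of \cite[Definition 2.1]{HJR3}. In particular, every non-diameter arc crossing automatically satisfies the distance-$1$ requirement when $m=1$, so the condition ``those of $(i,k),(i,\ell),(j,k),(j,\ell)$ which are $m$-arcs'' becomes the familiar statement that all four arcs between endpoints of a crossing pair must lie in the diagram.

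Then I would apply Theorem \ref{mainresult} directly: the equivalence of (2) and (4) there asserts that $(\X,\X^\perp)$ is a torsion pair in $\C_{D_n}^m$ if and only if $\XX$ is a Ptolemy diagram of type $D$. Specializing to $m=1$ and using the identifications above yields exactly the corollary. No new argument is required; the corollary is a translation of the main theorem into the language of \cite{HJR3}.

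I do not expect any real obstacle, since the work was done in proving Theorem \ref{mainresult}. The only point worth double-checking is the compatibility of the two combinatorial models: one must verify that the correspondence $\varphi$ of Proposition \ref{Pro1}, when restricted to $m=1$, reproduces the Fomin--Zelevinsky parametrization of $\mathrm{ind}\,\C_{D_n}$ used in \cite{HJR3}, and that the notion of crossing in Definition \ref{c1} matches theirs (including the convention that the two diameters $(i,i+n)_g$ and $(i,i+n)_r$ do not cross each other but cross all other diameters of the opposite colour). Both compatibilities are immediate from the construction of $\varphi$ and from Definition \ref{c1}(c), so the proof reduces to citing Theorem \ref{mainresult}.
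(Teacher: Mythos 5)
Your proposal is correct and matches the paper's argument exactly: the corollary is obtained by specializing Theorem \ref{mainresult} to $m=1$, and the paper's own proof is literally the one-line statement that this is a direct consequence of that theorem. The extra checks you flag (that $N=n$, that the combinatorial models agree via $\varphi$, and that the Ptolemy conditions reduce to those of \cite{HJR3}) are indeed the right sanity checks and are already noted in the remark following Definition \ref{c2}.
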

\begin{proof}
This is a direct consequence of Theorem \ref{mainresult}.
\end{proof}

\bigskip


\end{document}